\definecolor{gray}{gray}{0.7}
\newcommand{\ben}{\begin{enumerate}}
\newcommand{\een}{\end{enumerate}}
\newcommand{\ra}{\rightarrow}
\newcommand{\R}{\mathbb R}
\newcommand{\bit}{\begin{itemize}}
\newcommand{\eit}{\end{itemize}}
\newcommand{\Ga}{\Gamma}
\newcommand{\D}{\partial}
\newcommand{\la}{\lambda}
\newcommand{\al}{\alpha}
\newcommand{\ol}{\overline}
\newcommand{\de}{\delta}
\newcommand{\lb}{\linebreak[1]}
\newcommand{\can}{\textnormal{can}}
\newcommand{\IR}{\mathbb{R}}
\newcommand{\IN}{\mathbb{N}}
\newcommand{\IZ}{\mathbb{Z}}
\newcommand{\M}{\mathcal{M}}
\newcommand{\eps}{\varepsilon}
\newcommand{\ov}[1]{\overline{#1}}
\newcommand{\td}[1]{\widetilde{#1}}
\DeclareMathOperator*{\osc}{osc}
\DeclareMathOperator{\rot}{rot}
\DeclareMathOperator{\Ric}{Ric}
\DeclareMathOperator{\Rm}{Rm}
\DeclareMathOperator{\id}{id}
\DeclareMathOperator{\spann}{span}
\newcommand{\EMPTY}[1]{}
\newtheorem*{Claim1}{Claim 1}
\newtheorem*{Claim2}{Claim 2}
\newtheorem{Theorem}{Theorem}[section]
\newtheorem{Lemma}[Theorem]{Lemma}
\newtheorem{Proposition}[Theorem]{Proposition}
\newtheorem{lemma}[Theorem]{Lemma}
\newtheorem{corollary}[Theorem]{Corollary}
\newtheorem{Definition}[Theorem]{Definition}
\newtheorem{definition}[Theorem]{Definition}
\theoremstyle{remark}
\numberwithin{equation}{section}
\title{On the rotational symmetry of 3-dimensional $\kappa$-solutions}
\author{Richard H. Bamler}
\address{Department of Mathematics, University of California, Berkeley, Berkeley, CA 94720}
\email{rbamler@berkeley.edu}
\author{Bruce Kleiner}
\address{Courant Institute of Mathematical Sciences, New York University,  251 Mercer St., New York, NY 10012}
\email{bkleiner@cims.nyu.edu}
\thanks{The first author was supported by  NSF grant DMS-1611906. \\
\hspace*{3mm} The second author was supported by NSF grants DMS-1405899, DMS-1406394, DMS-1711556, and a Simons Collaboration grant.}
\date{\today}
\begin{document}

\maketitle

\begin{abstract}
In a recent paper, Brendle showed the uniqueness of the Bryant soliton among 3-dimensional $\kappa$-solutions.
In this paper, we present an alternative proof for this fact and show that compact $\kappa$-solutions are rotational symmetric.
Our proof arose from independent work relating to our Strong Stability Theorem for singular Ricci flows.
\end{abstract}

\tableofcontents

\section{Introduction}
In his celebrated paper \cite{Perelman1}, Perelman characterized the singularity formation of 3-dimensional Ricci flows.
More specifically, he proved that singularities are always modeled on $\kappa$-solutions, which he classified in a qualitative way.
Roughly speaking, such solutions are either quotients of the round shrinking sphere or cylinder or they are diffeomorphic to $\IR^3, S^3$ or $\IR P^3$ and contain large regions that are asymptotically cylindrical.
A prominent example in the $\IR^3$-case is the Bryant soliton, which is rotationally symmetric.
In \cite{Perelman1}, Perelman conjectured that the Bryant soliton is the only $\kappa$-solution on $\IR^3$.

In a series of papers \cite{Brendle2013, Brendle2018}, Brendle proved this conjecture (see also related results on the mean curvature flow of Brendle and Choi \cite{Brendle2017, Brendle2018a}).
First, in \cite{Brendle2013} he proved uniqueness of the Bryant soliton under the additional assumption that the $\kappa$-solution is a soliton.
Second, in \cite{Brendle2018} he showed the following two theorems, which imply Perelman's conjecture.

\begin{Theorem} \label{Thm_Bryant_uniqueness}
Any rotationally symmetric $\kappa$-solution on $\IR^3$ is homothetic to the Bryant soliton.
\end{Theorem}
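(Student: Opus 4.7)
The plan is to reduce Theorem~\ref{Thm_Bryant_uniqueness} to Brendle's earlier result \cite{Brendle2013}, which asserts that the Bryant soliton is the unique $\kappa$-solution among rotationally symmetric steady gradient Ricci solitons on $\IR^3$. It therefore suffices to show that every rotationally symmetric $\kappa$-solution on $\IR^3$ is in fact a steady gradient Ricci soliton.

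I would begin by exploiting the symmetry through warped-product coordinates. At each time $t$, the $SO(3)$-action has a unique fixed point $p_t$ (the tip), and the metric can be written as $g(t) = dr^2 + \phi(r,t)^2 g_{S^2}$ with $\phi(0,t) = 0$ and $\phi_r(0,t) = 1$, so that Ricci flow reduces to a single parabolic PDE for $\phi$ on $[0,\infty) \times (-\infty,0]$. Perelman's classification then supplies strong asymptotic data: the curvature is positive everywhere, each time slice is asymptotically cylindrical as $r \to \infty$, and the backward asymptotic soliton (as $t \to -\infty$, after rescaling) is the round shrinking cylinder.

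The central task is to upgrade this to a steady soliton structure. Using the cylindrical asymptotics, one can construct a candidate potential function $f(\cdot,t)$ on each time slice --- roughly by normalizing the geodesic distance from the tip against the cylindrical coordinate at infinity --- and a candidate vector field $X = \nabla f$. The goal is then to produce a monotone Lyapunov functional along the flow whose vanishing characterizes the equation $\Ric + \nabla^2 f = 0$: for instance, a suitably weighted $L^2$-norm of the soliton-defect tensor $\Ric + \nabla^2 f$, or a pinching quotient of the two sectional curvatures (radial versus spherical) that is adapted to the rotational symmetry. The round-cylinder backward asymptotic soliton and the cylindrical structure at spatial infinity should pin down the boundary values of this quantity, forcing it to vanish identically. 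Once the steady soliton equation holds, Brendle's theorem \cite{Brendle2013} identifies the solution, up to homothety, with the Bryant soliton.

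The main obstacle is the construction of this Lyapunov functional and the verification of its boundary behavior. Perelman's standard entropies $\mathcal{F}$ and $\mathcal{W}$ do not directly force a non-compact ancient $\kappa$-solution to be a steady soliton, so one must design a quantity tailored to both the rotational symmetry and the cylindrical ends. The rotational symmetry is essential because it reduces the problem to a $(1{+}1)$-dimensional parabolic equation, where sharper tools --- maximum principles, phase-plane analysis, and unique continuation against the explicit Bryant profile --- become available, and the delicate point is matching the asymptotics cleanly enough at the tip, at spatial infinity, and as $t \to -\infty$ that no nontrivial time-dependent deformation can survive.
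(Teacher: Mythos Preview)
The paper does not prove Theorem~\ref{Thm_Bryant_uniqueness}. It is quoted from Brendle's paper \cite{Brendle2018} and used as a black box; the introduction states explicitly that ``our argument relies on Brendle's Theorem~\ref{Thm_Bryant_uniqueness}.'' So there is no in-paper proof to compare against, and your proposal cannot be checked for agreement with it.

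As for the proposal itself: the reduction you outline --- show that a rotationally symmetric $\kappa$-solution on $\IR^3$ is a steady gradient soliton, then invoke \cite{Brendle2013} --- is the correct high-level picture. But what you have written is a strategy, not a proof. You do not construct the Lyapunov functional; you only describe the properties it ought to have, and you explicitly identify its construction and boundary analysis as ``the main obstacle.'' That obstacle is the entire content of the theorem. Brendle's actual argument in \cite{Brendle2018} does not proceed via an entropy-type monotone quantity of the kind you suggest; instead it builds explicit upper and lower barriers for the warping function out of the Bryant profile and uses the rotational symmetry to run a comparison argument that pins the solution to the Bryant soliton. Your proposal gives no indication of how to produce a functional whose monotonicity forces $\Ric+\nabla^2 f=0$, nor how the backward cylindrical asymptotics would fix its boundary values, so as it stands it does not constitute a proof.
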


\begin{Theorem} \label{Thm_O3_invariance}
Any $\kappa$-solution on $\IR^3$ is rotationally symmetric.
\end{Theorem}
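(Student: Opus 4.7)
The plan is to construct three honest rotational Killing fields $X_1, X_2, X_3$ on any $\kappa$-solution $(M^3, g(t))_{t \le 0}$ on $\IR^3$, by extending the rotational symmetry visible in its cylindrical asymptotics. Once a global action of $\mathfrak{so}(3)$ is produced, rotational symmetry of $(M, g(t))$ follows at once, and combining with Theorem \ref{Thm_Bryant_uniqueness} yields Perelman's conjecture.

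The starting point is Perelman's asymptotic structure theorem. Since $M$ is diffeomorphic to $\IR^3$ and noncompact, at each time $t$ the scalar curvature attains its maximum on a bounded "tip" region, and as one escapes to spatial infinity (or, at suitable base points, as $t \to -\infty$) the parabolically rescaled flow converges smoothly to the round shrinking cylinder $(S^2 \times \IR, g_{\text{cyl}})$. The cylinder carries three rotational Killing fields $V_1, V_2, V_3$ with $[V_i, V_j] = \epsilon_{ijk} V_k$. Pulling these back by the approximating diffeomorphisms produces, on increasingly large neck regions of $M$, a triple of approximate Killing fields $X_j^{(i)}$ for which $|\mathcal{L}_{X_j^{(i)}} g|$ is arbitrarily small in a weighted parabolic norm calibrated to the local curvature scale.

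The central task is to promote these approximate Killing fields to exact Killing fields on all of $M \times (-\infty, 0]$. The key mechanism is that if $X$ evolves by a suitable auxiliary equation along the Ricci flow, then $h := \mathcal{L}_X g$ satisfies a parabolic equation of Lichnerowicz type, schematically $\D_t h = \Delta_L h$. The plan is to extend each $X_j^{(i)}$ to a global solution of the auxiliary equation, then pass to a subsequential limit as the necks exhaust $M$. Smallness of $\mathcal{L}_{X_j^{(i)}} g$ on the initial neck region, together with a backward uniqueness / unique continuation theorem for the Lichnerowicz heat equation on $\kappa$-solutions, forces the limiting Lie derivative $\mathcal{L}_{X_j} g$ to vanish identically. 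The same Lichnerowicz machinery applied to $[X_i, X_j] - \epsilon_{ijk} X_k$ preserves the $\mathfrak{so}(3)$-bracket relations in the limit, so that the limiting $X_1, X_2, X_3$ generate an honest isometric $SO(3)$-action on $(M, g(t))$.

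The main obstacle I anticipate is the backward uniqueness step: one must show that a symmetric 2-tensor solution of the Lichnerowicz heat equation on the $\kappa$-solution which is small on arbitrarily large cylindrical necks in the distant past must vanish on all of $M \times (-\infty, 0]$. This is the PDE heart of the argument; it plays the role analogous to Brendle's Neumann-type maximum principle in \cite{Brendle2018}, but in our setting it is most naturally expressed as a variant of the Strong Stability Theorem for singular Ricci flows, adapted to the symmetric 2-tensor evolution and to the geometry near the tip. Once this is in hand, integration of the $X_j$ to a smooth global $SO(3)$-action is routine.
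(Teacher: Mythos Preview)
Your outline follows Brendle's original strategy in \cite{Brendle2018}: construct approximate Killing fields on the cylindrical necks, evolve them so that their Lie derivatives satisfy a Lichnerowicz-type equation, and then prove a vanishing theorem forcing $\mathcal{L}_{X_j}g\equiv 0$. That route does work, but it is precisely the approach the present paper is designed to \emph{replace}. Here the argument bypasses Killing fields entirely. One introduces a pointed ``roundness'' functional $\beta(x,t)$ (Definition~\ref{Def_pointed_roundness}) measuring the local $C^k$-distance of the metric to an $O(3)$-symmetric comparison model, and then uses the Strong Stability Theorem of \cite{BamlerKleiner2017} together with the linearized analysis of Proposition~\ref{Prop_LRDTF_0T} to show that $\beta$ satisfies a self-improving inequality (Proposition~\ref{Prop_alpha_decay}); combined with asymptotic roundness (Lemma~\ref{Lem_sup_alpha_to_0}) this forces $\beta\equiv 0$. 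The payoff of this direct metric-comparison scheme is that one avoids the localization and boundary error terms that arise when controlling approximate Killing fields near the tip and at spatial infinity.

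One caution on your sketch: the central PDE step is not really ``backward uniqueness'' but a forward-in-time Liouville/decay statement for the linearized equation, and it is essential to explicitly identify and remove the non-decaying modes. In the paper these are the rotationally invariant tensors and the Hessians $\nabla^2(f(r)u_j)$ of Proposition~\ref{Prop_LRDTF_0T}; in your language they correspond to the gauge freedom in normalizing the $X_j$ on successive necks. Without a mechanism to absorb this gauge ambiguity, a naive limit of approximate Killing fields need not converge, so your plan as written has a genuine gap at exactly the point you flagged as the main obstacle.
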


Here rotationally symmetric means that the solution admits an isometric $O(3)$-action whose principal orbits are 2-spheres. 
The proof of Theorem~\ref{Thm_O3_invariance} relies on the earlier uniqueness theorem from \cite{Brendle2013} and on Theorem~\ref{Thm_Bryant_uniqueness}.

In the same paper, Brendle remarks that the techniques in his paper can be adapted to the compact case, thereby proving:

\begin{Theorem} \label{Thm_O3_invariance_general}
Any 3-dimensional $\kappa$-solution is either rotationally symmetric or homothetic to a quotient of the round shrinking sphere.
\end{Theorem}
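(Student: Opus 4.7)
My plan is to follow Brendle's strategy for Theorem~\ref{Thm_O3_invariance} and adapt the key steps to the compact case. Let $(M, g_t)_{t \in (-\infty, 0]}$ be a 3-dimensional $\kappa$-solution that is not homothetic to a quotient of the round shrinking sphere. Perelman's qualitative classification tells us that $M$ must be diffeomorphic to $S^3$ or $\IR P^3$, and as $t \to -\infty$ the suitably rescaled solution contains arbitrarily long $\epsilon$-neck regions separating two cap regions; by Theorem~\ref{Thm_Bryant_uniqueness} combined with standard point-picking arguments, each cap region is modeled, after rescaling, on the tip of the Bryant soliton.

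First I would fix a small $\epsilon > 0$, choose a sufficiently negative time $t_0$, and decompose $(M, g_{t_0})$ into a very long $\epsilon$-neck $N$ sandwiched between two cap regions $C^\pm$. The standard $O(3)$-action on the round cylinder supplies three approximate rotational vector fields on $N$, and the rotational symmetry of the Bryant soliton (Theorem~\ref{Thm_Bryant_uniqueness}) supplies analogous rotational vector fields on each cap. These would be patched together via a partition of unity to obtain an approximate $\mathfrak{so}(3)$ of vector fields on all of $M$.

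Next I would iteratively improve these approximate Killing fields using Brendle's neck improvement theorem together with an analogous improvement near the caps (based on the rigidity of the Bryant soliton). Brendle's improvement lemma produces, from a vector field on an $\epsilon$-neck that is close to a cylindrical rotation, a new field which is quantitatively closer to being an exact Killing field; combined with the Lichnerowicz-type parabolic equation satisfied by Killing fields on a Ricci flow background, iteration yields approximate Killing fields on $M$ of arbitrary accuracy. A diagonal extraction then produces three exact Killing fields on $(M, g_{t_0})$ generating an $\mathfrak{so}(3)$, and since isometries are preserved under Ricci flow, this yields a global isometric $O(3)$-action on the entire flow with principal orbits $2$-spheres.

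The hard part will be compatibility between the two caps. In the non-compact case, a single cylindrical end already determines the axis of rotation, so Brendle's improvement effectively involves only one cap. In the compact case one must produce rotational symmetries at \emph{both} caps whose axes match through the intermediate neck. The key observation is that the axis of an approximate rotation along an $\epsilon$-neck is rigid — small perturbations of the axis lead to large errors at the far end — so the axis at one cap propagates essentially uniquely through the long neck to the other cap. Making this rigidity quantitative, and integrating it with the iterative improvement so that the corrections at the two caps remain simultaneously consistent, is the main difficulty I anticipate.
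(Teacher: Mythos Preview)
Your outline is a legitimate route---indeed it is essentially what Brendle himself indicates works---but it is \emph{not} the approach taken in this paper. The paper deliberately avoids approximate Killing fields altogether. Instead it introduces a pointwise ``pointed roundness'' quantity $\beta(x,t)$ (Definition~\ref{Def_pointed_roundness}) measuring local $C^{m}$-closeness to some rotationally symmetric comparison metric, and proves a decay estimate (Proposition~\ref{Prop_alpha_decay}) by a limit/contradiction argument: one glues the local comparisons into a global $O(3)$-invariant metric $g'$ (Lemma~\ref{Lem_gluing}), runs Ricci flow from $g'$, and invokes the Strong Stability Theorem of \cite{BamlerKleiner2017} to express the resulting flow as a Ricci--DeTurck perturbation $h$ of the original. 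The linearized problem (Proposition~\ref{Prop_LRDTF_0T}) then shows that, up to a rotationally invariant piece and a Hessian correction, $h$ becomes small---which translates back into $\beta$ shrinking by a factor of $10$. Iterating this, combined with the asymptotic roundness of Lemma~\ref{Lem_sup_alpha_to_0}, forces $\beta\equiv 0$.

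What the two approaches buy is roughly this. Your (Brendle's) scheme is geometrically explicit but requires localized improvement lemmas on necks and caps, and---as you correctly flag---a nontrivial argument reconciling the rotation axes at the two caps through the long neck. The paper's scheme trades that difficulty for the black box of the Strong Stability Theorem: because the comparison metric $g'$ is built globally from the start and then flowed, no boundary error terms or axis-matching issues ever arise. The price is dependence on the machinery of singular Ricci flow spacetimes from \cite{BamlerKleiner2017}, whereas your route stays closer to classical Ricci flow analysis.
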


In this paper we offer an alternative proof of Theorem~\ref{Thm_O3_invariance_general}.  This proof arose from independent work on questions relating to singular Ricci flows and their strong stability properties.  Our primary motivation is to share this different approach with the community since the ideas may be useful in other contexts.  We also want to provide a detailed argument covering both the compact and noncompact cases.  We emphasize that we acknowledge Brendle's prior solution to this problem and note, moreover, that our argument relies on Brendle's Theorem~\ref{Thm_Bryant_uniqueness}.

We now give a brief sketch of our argument.
Like Brendle's proof, our proof relies on a stability result that states that the degree of rotational symmetry improves as we move forward in time.
We establish this stability property in two steps.
In Section~\ref{sec_linear}, we first consider the linearized problem and show that rotational symmetry is stable under the flow modulo a few modes, which can be removed by reparameterization.
In Section~\ref{sec_main_argument}, we use a limit argument and the Strong Stability Theorem for Ricci flow spacetimes from our paper \cite{BamlerKleiner2017} to reduce the non-linear case to the linear case.

Both steps of our proof are different from Brendle's approach.
For example, we employ a different iteration scheme that allows us to avoid having to localize several estimates in the linear and non-linear cases.
As a result we don't have to deal with error terms arising from the boundary.
In addition, we use of the Strong Stability Theorem to directly compare metrics with rotationally symmetric ones.
This approach replaces Brendle's analysis of approximate Killing fields.

\section{Preliminaries}

\subsection{The Ricci-DeTurck equation and its linearization}
We briefly recall the Ricci-DeTurck equation.
For more details we refer to \cite[Appendix~A]{BamlerKleiner2017} (where the same notation is used) or \cite{Topping2006}.
Let $(g_t)_{t \in [t_1,t_2]}$ and $(g'_t)_{t \in [t_1,t_2]}$ be Ricci flows on a manifolds $M, M'$, respectively and consider a family of diffeomorphisms $(\chi_t :M' \to M)_{t \in [t_1,t_2]}$, evolving by the harmonic map heat flow
\[ \partial_t \chi_t = \triangle_{g'_t, g_t} \chi_t = \sum_{i=1}^n \big( \nabla^{g_t}_{d\chi_t (e_i)} d\chi_t (e_i) - d\chi_t (\nabla^{g'_t}_{e_i} e_i) \big), \]
where $\{ e_i \}_{i=1}^n$ is a local frame field on $M'$ that is orthonormal with respect to $g'_t$.
Then the pullback
\[ g_t + h_t := \big( \chi_t^{-1} \big)^* g'_t - g_t \]
satisfies the \textbf{Ricci-DeTurck equation}
\begin{equation} \label{eq_RDT}
 \partial_t (g_t + h_t) = -2 \Ric (g_t + h_t) - \mathcal{L}_{X_{g_t} (g_t + h_t)} (g_t + h_t), 
\end{equation}
where the vector field $X_{g_t} (g_t + h_t)$ is defined by
\[ X_g (g^*) := \triangle_{g^*, g} \id_M = \sum_{i=1}^n \big( \nabla^g_{e_i} e_i - \nabla^{g^*}_{e_i} e_i \big), \]
for a local frame $\{ e_i \}_{i=1}^n$ that is orthonormal with respect to $g^*$.
The Ricci-DeTurck equation has the following analytical structure
\begin{equation} \label{eq_RDT_analytical}
 \nabla_{\partial_t} h_t = \triangle_{g_t} h_t + 2 \Rm_{g_t} (h_t) + \mathcal{Q}_{g_t} [h_t] , 
\end{equation}
where the left-hand side uses Uhlenbeck's trick
\[  (\nabla_{\partial_t} h_t)_{ij} = (\partial_t h_t)_{ij}  - \frac12 g^{pq} \big( h_{pj}  \partial_t g_{qi} + h_{ip}  \partial_t g_{qj} ), \]
\[ (\Rm_{g_t} (h_t))_{ij} = g^{pq} R_{p ij}^{\quad u} h_{qu} \]
and the last term has the structure
\[ \mathcal{Q}_{g_t} [h_t] = \big( (g+h)^{-1} - g^{-1} \big) * \big( \nabla^{g_t, 2} h + \Rm_{g_t} * h_t ) + (g+h)^{-1} * (g+h)^{-1} *  \nabla^{g_t} h * \nabla^{g_t} h. \]
The linearization of (\ref{eq_RDT_analytical}) is called the \textbf{linearized Ricci-DeTurck equation}
\begin{equation} \label{eq_LRDTF}
 \nabla_{\partial_t} h_t = \triangle_{g_t} h_t + 2 \Rm_{g_t} (h_t).
\end{equation}
The following fact, which has also been used in \cite{Brendle2013, Brendle2018}, will be important for us.

\begin{lemma} \label{lem_HessianLRDTF}
If $w :M\times [t_1,t_2]\ra \R$ solves the heat equation $\partial_t w = \triangle_{g_t} w$ on a Ricci flow background $(M,(g_t)_{t\in [t_1,t_2]})$, then its Hessian $h_t:=\nabla^2 w_t$ solves (\ref{eq_LRDTF}) on the same Ricci flow background.
\end{lemma}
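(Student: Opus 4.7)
The proof is a direct tensor computation. The plan is to differentiate the heat equation $\partial_t w = \Delta_{g_t} w$ twice covariantly, carefully accounting for (a) the time-dependence of the Levi-Civita connection under Ricci flow, (b) the non-commutativity of $\nabla^2$ and $\Delta_{g_t}$, and (c) the Uhlenbeck correction on the left-hand side of (\ref{eq_LRDTF}).

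First I would write $h_{ij} = \nabla_i \nabla_j w = \partial_i \partial_j w - \Gamma_{ij}^k \partial_k w$ and differentiate in time:
\[
\partial_t h_{ij} = \nabla_i \nabla_j (\partial_t w) - (\partial_t \Gamma_{ij}^k)\,\nabla_k w = \nabla_i \nabla_j \Delta_{g_t} w - (\partial_t \Gamma_{ij}^k)\,\nabla_k w.
\]
The standard Ricci-flow formula gives $\partial_t \Gamma_{ij}^k = -\big(\nabla_i R_j{}^k + \nabla_j R_i{}^k - \nabla^k R_{ij}\big)$, so the Christoffel term contributes an explicit first-derivative-of-curvature piece contracted with $\nabla w$. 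Separately, the Uhlenbeck correction adds $R_i{}^p h_{pj} + R_j{}^p h_{pi}$ to $\partial_t h_{ij}$, producing $\nabla_{\partial_t} h_{ij}$ on the left.

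Next I would commute $\nabla^2$ with $\Delta_{g_t}$ using the Ricci identity applied twice (first swap $\nabla_l$ past $\nabla_i$ in $g^{kl}\nabla_k \nabla_l \nabla_i \nabla_j w$, then swap $\nabla_k$ past $\nabla_i$), together with the Bochner identity $\Delta \nabla_j w = \nabla_j \Delta w + R_j{}^p \nabla_p w$. Using the symmetry of the Hessian and contractions of the Riemann tensor, this produces an identity of the schematic form
\[
\nabla_i \nabla_j \Delta w = \Delta h_{ij} + 2\, R_{ikjl}\, h^{kl} - R_i{}^p h_{pj} - R_j{}^p h_{pi} + \big(\nabla_i R_j{}^k + \nabla_j R_i{}^k - \nabla^k R_{ij}\big)\,\nabla_k w,
\]
where the last terms come from applying the second Bianchi identity to the derivative-of-curvature pieces that arise in the commutator.

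Assembling everything, the Ricci-with-$h$ contractions from the commutator exactly cancel the Uhlenbeck correction on the left, and the curvature-derivative terms from the commutator exactly cancel the Christoffel-evolution contribution. What remains is $\nabla_{\partial_t} h_{ij} = \Delta_{g_t} h_{ij} + 2\, \Rm_{g_t}(h_t)_{ij}$, which is (\ref{eq_LRDTF}).

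The computation involves no genuine obstacle beyond careful index bookkeeping: the main point to verify is that the three curvature-gradient contributions (from $\partial_t \Gamma$, from commuting $\nabla_k \nabla_l$ past $\nabla_i \nabla_j$ via second Bianchi, and from applying Bochner to $\Delta \nabla_j w$) precisely balance, and that the Ricci-tensor terms on the two sides reproduce the Uhlenbeck-trick correction. Once the sign conventions for $\Rm$ and Ric used in (\ref{eq_RDT_analytical}) are fixed consistently, the cancellation is automatic.
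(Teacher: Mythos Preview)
Your proposal is correct: the direct Hessian computation you outline does establish the lemma, and the cancellations you describe (Christoffel evolution against the commutator's curvature-gradient terms via the second Bianchi identity, and the Ricci contractions against the Uhlenbeck correction) are exactly what makes it work.

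The paper's proof is organized slightly differently. Rather than computing directly with the Hessian of $w$, the paper first establishes a more general intermediate statement: if a time-dependent vector field $X_t$ satisfies the heat equation $\nabla_{\partial_t} X = \triangle_{g_t} X$, then its Lie derivative $\mathcal{L}_{X_t} g_t$ solves the linearized Ricci-DeTurck equation. This is proved by computing the commutators $\nabla_{\partial_t}\nabla_i X^j - \nabla_i\nabla_{\partial_t} X^j$ and $\triangle\nabla_i X^j - \nabla_i\triangle X^j$ and combining them via the second Bianchi identity to get $\nabla_{\partial_t}\nabla_i X^j - \triangle\nabla_i X^j = -2R_{iklj}\nabla_k X^l$. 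The lemma then follows by taking $X = \tfrac12\nabla w$, which satisfies the vector heat equation by Bochner, and noting $\tfrac12\mathcal{L}_{\nabla w}g = \nabla^2 w$. Your route is essentially the specialization of this to gradients, carried out at the level of $(0,2)$-tensors rather than $(1,1)$-tensors; the paper's version has the minor advantage of yielding the vector-field statement along the way, but for the purposes of the lemma the two arguments are equivalent and involve the same bookkeeping.
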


\begin{proof}
Let $(X_t)_{t \in [t_1,t_2]}$ be a time dependent vector field that evolves by the heat equation 
\begin{equation} \label{eq_HEX}
\nabla_{\partial_t} X = \triangle_{g_t} X.
\end{equation}
Then in any orthonormal frame
\begin{multline*}
 \nabla_{\partial_t} \nabla_i X^j - \nabla_i \nabla_{\partial_t} X^j =  \dot \Gamma_{ik}^j \, X^k +  R_{ik} \, \nabla_k X^j + \nabla_i R_{jk} \, X^k \\
 =  - \nabla_k R_{ij} \, X^k + \nabla_j R_{ik} \, X^k + R_{ik} \, \nabla_k X^j,
\end{multline*}
while
\[ \triangle \nabla_i X^j - \nabla_i \triangle X^j = - 2R_{iklj} \, \nabla_k X^l + R_{ik} \, \nabla_k X^j - \nabla_l R_{ilkj} \,  X^k. \]
Combining both equations and applying the second Bianchi identity yields
\[  \nabla_{\partial_t} \nabla_i X^j - \triangle \nabla_i X^j = - 2R_{iklj} \, \nabla_k X^l . \]
So $h_t := \mathcal{L}_{X_t} g_t$ solves (\ref{eq_LRDTF}).
Lastly, observe that by the Bianchi identity $X := \frac12 \nabla^{g_t} w$ solves (\ref{eq_HEX}).
So $h_t = \frac12 \mathcal{L}_{\nabla w_t} g_t = \nabla^2 w_t$ solves (\ref{eq_LRDTF}).
\end{proof}

\bigskip

\subsection{Linearized Ricci-DeTurck flow on the round cylinder}
\label{subsec_lrdtf_round_cylinder}
We now show that bounded ancient solutions to the linearized Ricci-DeTurck flow on the round cylinder must be Hessians of a very special form.  
This will be used in the proof of Proposition~\ref{Prop_LRDTF_0T}.
We remark that the following results are similar to \cite[Proposition 6.1]{Brendle2018}.

In order to facilitate the proof of decay using eigenspace decompositions, we will make use of appropriate $L^2$-type norms.
\begin{definition}[Fiberwise $L^2$-norm]
\label{def_fiberwise_l2_norm}
Let $(S^2\times \R,(g_t)_{t\in(-\infty,0]})$ be the shrinking round cylinder, and suppose  $(h_t)_{t\in I}$ is a $2$-tensor field defined on a time interval $I$.  Then for $t\in I$,  the {\bf (normalized) fiberwise $L^2$-norm} of $h_t$ at $r\in \R$ is 
$$
\|h_t\|_{L^2(S^2\times\{r\})}:=\left(\frac{1}{|S^2\times\{r\}|}\int_{S^2\times \{ r \}} |h_t|^2\;dV  \right)^\frac12
$$
where $dV$ and $|h|$ denotes the Riemannian measure and Riemannian norm induced by $g_t$, respectively.
\end{definition}

\begin{lemma}[Partial vanishing on the cylinder]
\label{lem_partial_vanishing_cylinder}
Let $(h_t)_{t\in I}$ be a linearized Ricci-DeTurck flow on a shrinking round cylinder $(S^2\times\R,(g_t)_{t\in(-\infty,0)})$, with $g_t=dr^2+2|t|g_{S^2}$.  
Assume that the average of $h$ under the standard $O(3)$-action vanishes.
Then
\begin{enumerate}[label=(\alph*)]
\item $\sup_{r\in \R}\;\|h_t\|_{L^2(S^2\times\{r \})}$ is a non-increasing function of $t$.
\item If $\inf I=-\infty$ and $\sup_{(r,t)\in\R\times I}\|h_t\|_{L^2(S^2\times\{ r \})}<\infty$, then 
\[ h=(a_1 u_1 + a_2 u_2 + a_3 u_3) \,g_{S^2}, \]
where $u_1, u_2, u_3$ are the coordinate functions on $S^2 \subset \IR^3$ and $a_1, a_2, a_3 \in \IR$.
\end{enumerate}
\end{lemma}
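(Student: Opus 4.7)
The plan is to decompose $h$ fiberwise according to the product structure and the standard $O(3)$-action. Writing $\gamma_t := g_t|_{S^2 \times \{r\}} = 2|t|\,g_{S^2}$, every symmetric 2-tensor on the cylinder admits the orthogonal pointwise decomposition
\[
h = f\,dr^2 + (dr \otimes \beta + \beta \otimes dr) + \phi\,\gamma_t + k,
\]
where $f, \phi$ are scalar functions on $S^2 \times \R$, $\beta$ is a 1-form on the $S^2$-fibers, and $k$ is a traceless symmetric 2-tensor on the fibers. This decomposition is orthogonal pointwise and invariant under both the fiberwise $O(3)$-action and translation in $r$; since the LRDT equation on the cylinder respects these symmetries, it preserves the decomposition. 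The $O(3)$-average hypothesis translates into the vanishing of the fiberwise spherical mean of $f$ and of $\phi$; by elementary representation theory (the isotropy $O(2)$ at a point of $S^2$ has no invariant covectors or traceless symmetric endomorphisms of $\R^2$) the $\beta$ and $k$ pieces already have $O(3)$-average zero automatically.

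For part (a), I would derive a scalar parabolic differential inequality for the fiberwise norm and conclude by a one-dimensional maximum principle. Uhlenbeck's trick applied to $|h|^2$ gives $\partial_t |h|^2 = \Delta_{g_t}|h|^2 - 2|\nabla h|^2 + 4\langle \Rm_{g_t}(h), h\rangle$; integrating over an $S^2$-fiber against the fixed measure $dV_{g_{S^2}}$ and using that $\int_{S^2\times\{r\}}\Delta_{S^2, g_t}|h|^2\,dV_{g_{S^2}} = 0$, one obtains, for $F(r, t) := \|h_t\|_{L^2(S^2\times\{r\})}^2$,
\[
\partial_t F = \partial_r^2 F + \frac{1}{|S^2|}\int_{S^2\times\{r\}}\bigl(4\langle \Rm_{g_t}(h), h\rangle - 2|\nabla h|^2\bigr)\,dV_{g_{S^2}}.
\]
Mode-by-mode: the $f$ and $\beta$ pieces contribute zero to $\Rm_{g_t}(h)$, since the Riemann tensor of the cylinder is carried by the $S^2$-block; for the $\phi$-piece the $O(3)$-hypothesis removes the $l = 0$ spherical harmonic, forcing the smallest eigenvalue of $-\Delta_{S^2, g_t}$ on the remaining modes to be at least $2K$ with $K=(2|t|)^{-1}$, which exactly absorbs the $\Rm$-coupling; for the $k$-piece one invokes the standard spectral bound on the rough Laplacian of $(S^2, g_{S^2})$ restricted to traceless symmetric 2-tensors, which again absorbs the $\Rm$-coupling. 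The residual is therefore non-positive, so $\partial_t F \leq \partial_r^2 F$, and the scalar 1D maximum principle yields that $\sup_{r \in \R} F(\cdot, t)$ is non-increasing.

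For part (b), I would perform a mode-by-mode Liouville analysis. Expanding each of $f, \beta, \phi, k$ further into $S^2$-eigenmodes of the appropriate rough Laplacian, each resulting scalar coefficient $\psi(r, t)$ satisfies a 1D parabolic PDE of the form
\[
\partial_t \psi = \partial_r^2 \psi + \frac{c}{|t|}\,\psi
\]
with a mode-dependent constant $c \in \R$ that can be read off from the $S^2$-eigenvalue and the $\Rm$-coupling. A direct computation gives: for the $f$-piece $c_l = -\tfrac{l(l+1)}{2}$; for the $\phi$-piece $c_l = \tfrac{2 - l(l+1)}{2}$; and for the $\beta$- and $k$-pieces $c$ is strictly negative for every relevant mode. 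After the parabolic rescaling $\tau = -\log|t|$, $\rho = r/\sqrt{|t|}$, a standard Liouville argument for the resulting Ornstein--Uhlenbeck-type equation, combined with the uniform $L^2$-bound from part (a), shows that bounded ancient solutions vanish whenever $c < 0$ and are constants in $(r, t)$ when $c = 0$. The $O(3)$-average condition rules out the only strictly positive case, namely $c_0 = 1$ in the $\phi$-piece. Consequently the only surviving modes are the $l = 1$ modes of $\phi$, which are constants in $(r, t)$; since the $l = 1$ spherical harmonics on $S^2$ are exactly $u_1, u_2, u_3$, this forces $\phi = a_1 u_1 + a_2 u_2 + a_3 u_3$ for constants $a_i$, and hence $h = \phi\,\gamma_t = (a_1 u_1 + a_2 u_2 + a_3 u_3)\,\gamma_t$, which is the asserted form (with the convention that $g_{S^2}$ in the statement denotes the fiberwise sphere metric $\gamma_t$).

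The main obstacle I expect is the fiberwise spectral inequality in part (a), i.e., showing that for each tensorial piece the coupling $2\int\langle \Rm_{g_t}(h), h\rangle$ is dominated by $\int|\nabla h|^2$ after integration over the fiber. The sharp case is the $\phi$-piece at $l = 1$, where the two terms balance exactly and the inequality hinges precisely on the $O(3)$-average-zero assumption eliminating the strictly destabilizing $l = 0$ mode; the $k$-piece also requires invoking the $SO(3)$-representation-theoretic computation of the spectrum of the rough Laplacian on $\mathrm{Sym}_0^2 T^* S^2$. A secondary technical point is the Liouville step in part (b), where one must carefully rule out growth both as $t \to -\infty$ and as $t \to 0^-$ in the rescaled picture, combining the boundedness of $F$ with the explicit exponential rate encoded by $c/|t|$, and also handle the marginal $c = 0$ case by identifying bounded ancient solutions of the 1D heat equation with constants.
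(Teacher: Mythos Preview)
Your approach is essentially the paper's: the same tensorial and $S^2$-eigenspace decomposition, followed by the maximum principle on each summand, with the $l=1$ conformal mode emerging as the unique marginal case. For part (b), however, the Ornstein--Uhlenbeck rescaling is more machinery than you need --- the paper simply observes that for each mode with $c<0$ the fiberwise $L^2$-norm is dominated by the explicit barrier $M\,(|t|/|t_0|)^{-c}$, which vanishes as $t_0\to-\infty$, while for the marginal $c=0$ mode the equation reduces to the standard $1$-dimensional heat equation, whose bounded ancient solutions are constants by the usual gradient estimate.
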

\begin{proof}
This follows from separation of variables and the maximum principle.

The linearized Ricci-DeTurck equation has the form
\begin{equation}
\label{eqn_lrdtf_cylinder}
\begin{aligned}
\nabla_{\D_t}h_t&=\Delta_th_t+\Rm_t(h_t)=\Delta^{S^2}_th_t+ \nabla^2_{\D_r, \D_r} h_t+\Rm_t(h)\\
&= \nabla^2_{\D_r, \D_r} h_t+|t|^{-1}(\Delta^{S^2}_{-1}h_t+\Rm_{-1}(h))\,.
\end{aligned}
\end{equation}We have two decompositions of the space $\Ga(s^2T^*(S^2\times \R))$ of symmetric $2$-tensor fields:
the decomposition  
$$
\Ga(s^2T^*(S^2\times \R))=\Ga(s^2\R^*)\oplus \Ga(s^2(T^*S^2))\oplus \Ga(T^*(S^2)\otimes T^*\R)
$$
induced by the bundle decomposition $s^2(T^*(S^2\times\R))\simeq s^2\R^*\oplus s^2(T^*S^2)\oplus (T^*(S^2)\otimes T^*\R)$, and the decomposition $h=\sum_jh_j$ induced by the eigenspace decomposition for the fiber Laplacian $\Delta^{S^2}_{-1}$. 
Straightforward computation shows that these decompositions are compatible with one another, and also with both the linearized Ricci-DeTurck flow and the fiberwise $L^2$-metric.
So it suffices to verify the lemma when $h$ lies in a single summand of each of the decompositions.

{\em Case 1. $h\in \Ga(s^2T^*\R)\oplus\Ga(T^*S^2\otimes T^*\R)$ belongs to the $\la$-eigenspace of $\Delta^{S^2}_{-1}$. \quad}  Then $\Rm(h)\equiv 0$ because $i_{\D_r}\Rm= 0$, and $\la>0$ since 
the zero eigenspace of $\Delta^{S^2}_{-1}$ intersects $\Ga(s^2T^*\R)\oplus\Ga(T^*S^2\otimes T^*\R)$ in the rotationally symmetric tensors, which vanish by assumption.  
The maximum principle applied to (\ref{eqn_lrdtf_cylinder}) now gives assertion (a), and if $I=(-\infty,t_0)$ then $h\equiv 0$.

{\em Case 2. $h\in \Ga(s^2(T^*S^2))$ belongs to the $\la$-eigenspace of $\Delta^{S^2}_{-1}$. \quad}  Then $h$ further decomposes as $h=h^{\text{scalar}}+h^{\text{traceless}}$ where $h^{\text{scalar}}=\phi g_{S^2}$ and $h^{\text{traceless}}$ is traceless.
Furthermore, this decomposition is  invariant under linearized Ricci-DeTurck flow.  
If $h = h^{\text{traceless}}$, then $\Rm(h)=-\frac12 h$, so applying the maximum principle as in Case 1 we are done.  
If $h=h^{\text{scalar}}$, then $\Rm(h)=\frac12 h$.   
If $\la< -1$, then we are done by the maximum principle.  
If $\la\geq -1$, then  $\la=-1$, because the case $\la =0$ is excluded by the fact that the $O(3)$-average of $h$ vanishes.   
Hence (\ref{eqn_lrdtf_cylinder}) reduces to the direct sum of three copies of the standard heat equation.  Now assertion (a) follows from the maximum principle, while if $I=(-\infty,t_0)$, then $\nabla_{\partial_r} h\equiv \D_t h\equiv 0$ by a gradient estimate.
\end{proof}

\subsection{A semilocal maximum principle}
In this subsection we restate the semilocal maximum principle from \cite[Proposition~9.1]{BamlerKleiner2017} in a slightly different form for the case in which the background flow is a $\kappa$-solution and the perturbation $h$ evolves by the linearized Ricci-DeTurck flow.

\begin{Proposition}[Semilocal maximum principle] 
\label{Prop_weighted_semilocal_max}
For any $E > 1$ there are constants $L = L(E), H = H(E), C= C(E) < \infty$ such that the following holds.

Let $(M, (g_t)_{t \leq 0})$ be a $\kappa$-solution, $T < 0$ and $(x_0,t_0) \in M \times [-T, 0]$.
Consider the parabolic neighborhood $P_L(x_0,t_0) := P(x_0,t_0, L R^{-1/2} (x_0,t_0))$ and let $(h_t)_{t \in [-T,0]}$ be a solution to the linearized Ricci-DeTurck flow equation (\ref{eq_LRDTF}) on $P_L(x_0,t_0) \cap M \times [-T,0]$. 
Then for any $a \geq 0$ we have
\begin{multline} \label{eq_semi_loc}
 \bigg( e^{- H a t} \frac{|h|}{ R^{E}+a^{E}} \bigg) (x_0,t_0) \leq \frac1{100} \sup_{P_L(x_0,t_0) \cap M \times [-T,0]} e^{- H a t} \frac{|h|}{R^{E}+a^{E}} \\
 + C \sup_{P_L (x_0,t_0) \cap M \times \{ - T \}} e^{- H a t} \frac{|h|}{R^{E}+ a^{E}} .
\end{multline}
f $P_L (x_0,t_0) \cap M \times \{ - T \} = \emptyset$, then the last term can be omitted.
\end{Proposition}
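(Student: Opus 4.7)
Since this proposition is a restatement of \cite[Proposition~9.1]{BamlerKleiner2017} specialized to a $\kappa$-solution background and to the linearized (rather than full) Ricci--DeTurck flow, my plan follows the strategy used there, simplified by two facts: the linearization removes the nonlinear error term $\mathcal{Q}_{g_t}[h_t]$, and $\kappa$-solutions automatically enjoy Perelman's curvature and derivative estimates. By parabolic rescaling I first reduce to the normalization $R(x_0,t_0)=1$ and $t_0=0$, after which it suffices to produce an explicit parabolic evolution inequality for the weighted quantity $Q := e^{-Hat}\,|h|/(R^E+a^E)$, uniformly in $a\geq 0$.

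The first step is the pointwise subsolution inequality. Applying Kato's inequality to the linearized equation \eqref{eq_LRDTF} together with the curvature bound $|\Rm|\leq C_0 R$ available on three-dimensional $\kappa$-solutions gives $(\partial_t-\Delta)|h|\leq C_0\, R\,|h|$. Combined with the 3D Ricci flow identity $(\partial_t-\Delta) R = 2|\Ric|^2 \geq \tfrac{2}{3}R^2$, Perelman's derivative estimate $|\nabla R|\leq C_1 R^{3/2}$, and a direct computation of the parabolic operator applied to $(R^E+a^E)^{-1}$, one obtains, after absorbing the unsigned cross terms by completion of squares,
\[ (\partial_t-\Delta) Q \;\leq\; -\bigl(\alpha(E)\,R + H a\bigr)\,Q \;+\; \text{first-order drift}, \]
where $\alpha(E) > 0$ as soon as $E$ exceeds an explicit threshold. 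The $Ha$-piece comes directly from the $e^{-Hat}$ prefactor and closes the estimate in the regime $a\gtrsim R$, where the $a^E$ term in the denominator would otherwise dominate the gain.

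Given this subsolution inequality, the semilocal estimate follows from a standard cutoff-plus-maximum-principle argument on the parabolic neighborhood $P_L(x_0,0)$: using a space-time cutoff $\psi$ supported in $P_L(x_0,0)$, one derives
\[ Q(x_0,0) \;\leq\; \eta(L) \sup_{P_L(x_0,0)} Q \;+\; C(E,L) \sup_{P_L(x_0,0)\cap\{t=-T\}} Q, \]
where $\eta(L)\to 0$ as $L\to\infty$ (the decay comes from integrating the $-\alpha(E)\,R\,Q$ term along parabolic annuli in the normalization $R(x_0,0)=1$). Choosing $L=L(E)$ large enough to ensure $\eta(L)\leq \tfrac{1}{100}$ then pins down $H=H(E)$ and $C=C(E)$, and the empty-initial-slice case is automatic since the cutoff argument never touches $t=-T$ when $P_L\cap\{t=-T\}=\emptyset$. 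The main technical obstacle I anticipate is handling the $|\nabla R|^2$ and $\nabla|h|\cdot\nabla R$ cross terms produced by differentiating the $R^E$ weight: they are not sign-definite and only become controllable once Perelman's derivative estimate is invoked and the leftover is absorbed into the $\alpha(E)\,R\,Q$ gain via Young's inequality, which in turn dictates how large $E$ must be taken at the threshold.
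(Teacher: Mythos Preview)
Your approach is fundamentally different from the paper's: you propose a direct barrier argument, deriving a pointwise evolution inequality for the weighted quantity $Q$ and then running a cutoff maximum principle, whereas the paper argues by contradiction and compactness. In the paper one normalizes to $R(x_0,t_0)=1$, takes a sequence of putative counterexamples with $L_i,H_i,C_i\to\infty$, passes to a limiting ancient linearized Ricci--DeTurck flow $h_\infty$ on a limiting $\kappa$-solution, and then shows $h_\infty\equiv 0$ in two cases: if $a_\infty:=\lim a_i>0$, the factor $e^{-H_ia_it}$ with $H_i\to\infty$ forces $h_\infty|_{t<0}\equiv 0$; if $a_\infty=0$, one has $|h_\infty|\leq C'R^E$ and invokes the Vanishing Theorem \cite[Theorem~9.8]{BamlerKleiner2017}. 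This soft argument works uniformly in $E>1$ and in $a\geq 0$ without any quantitative pointwise computation on a general $\kappa$-solution.

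There is, however, a genuine gap in your direct route. When the heat operator hits $w=R^E+a^E$, it produces the term $-E(E-1)R^{E-2}|\nabla R|^2$ with the wrong sign; after Perelman's estimate $|\nabla R|\leq C_1R^{3/2}$ this contributes a reaction term of order $+E(E-1)C_1^2\,\tfrac{R^{E+1}}{w}\,Q$. Since this grows quadratically in $E$, while your good term $\tfrac{2E}{3}\,\tfrac{R^{E+1}}{w}\,Q$ from $(\partial_t-\Delta)R\geq\tfrac{2}{3}R^2$ is only linear in $E$, the resulting coefficient $\alpha(E)=\tfrac{2E}{3}-C_0-E(E-1)C_1^2$ is a downward parabola in $E$. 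It can be positive at best on a bounded interval of $E$, and possibly on no interval at all, since neither $C_0$ (from $|\Rm|\leq C_0R$) nor $C_1$ is known to be small. So the claim ``$\alpha(E)>0$ as soon as $E$ exceeds an explicit threshold'' is backwards: increasing $E$ eventually destroys the sign, and the proposition as stated --- for \emph{every} $E>1$ --- is out of reach by this computation. The paper's compactness argument sidesteps the issue entirely by reducing to the ancient Vanishing Theorem, whose proof is insensitive to the size of $E$.
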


\begin{proof}
The proof is similar to that of \cite[Proposition~9.1]{BamlerKleiner2017} and essentially follows by rescaling the factor $a$.
For convenience of the reader we provide a proof here.

After applying a time-shift and parabolic rescaling we may assume without loss of generality that $t_0 = 0$ and $R(x_0, 0) = 1$.
Fix $E > 1$ and $L_i, H_i, C_i \to \infty$ and consider a sequence of counterexamples $(h_{i,t})_{t \in [-T_i,0]}$, $(M_i, (g_{i,t})_{t \leq 0})$, $(x_i, 0) \in M_i \times [-T_i,0]$, $a_i \geq 0$ with $R(x_i, 0)=1$.
After multiplying $(h_{i,t})_{t \in [-T_i,0]}$ by a scalar, we may assume that
\begin{equation} \label{eq_hixi_is_1}
 |h_i| (x_i, 0) = 1. 
\end{equation}
Next, we may assume that the $(M_i, (g_{i,t})_{t \leq 0})$ are $\kappa$-solutions for some uniform $\kappa > 0$, because otherwise the flows $(M^i, (g_{i,t})_{t \leq 0}$ would be quotients of the round shrinking sphere for large $i$ (see \cite[Lemma~C.1(a)]{BamlerKleiner2017}) and we could pass to the universal covers.
So, after passing to a subsequence, we may assume that the pointed flows $(M_i, (g_{i,t})_{t \leq 0}, x_i)$ smoothly converge to a pointed $\kappa$-solution $(M_\infty, (g_{\infty, t})_{t \leq 0}, x_\infty)$.
Lastly, after passing to a subsequence, we may assume that the limits $a_\infty := \lim_{i \to \infty} a_i \in [0, \infty]$ and $T_\infty := \lim_{i \to \infty} T_i \in [0, \infty]$ exist.

The assumption that the tensor fields $(h_{i,t})_{t \in [-T_i,0]}$ are counterexamples to (\ref{eq_semi_loc}) implies
$$ \sup_{P_{L_i} (x_i, 0) \cap M_i \times [-T_i,0]} e^{-H_i a_i t} \frac{|h_i|}{R^E+a_i^E} \leq 100 \frac{1}{1 + a_i^E} $$
$$ \sup_{P_{L_i} (x_i, 0) \cap M_i \times \{ -T_i \}} e^{-H_i a_i t} \frac{|h_i|}{R^E+a_i^E} \leq C_i^{-1}  \frac{1}{1 + a_i^E} $$
We can now argue as in the proof of \cite[Proposition~9.1]{BamlerKleiner2017} that $T_\infty = \infty$ and that, after passing to a subsequence, the tensor fields $(h_{i,t})_{t \in [-T_i,0]}$ converge to a solution $(h_{\infty,t})_{t \in (-\infty,0]}$ of the linearized Ricci-DeTurck equation on $(M_\infty, (g_{\infty,t})_{t \leq 0})$.
If $a_\infty > 0$, then $\lim_{i \to \infty} e^{-H_i a_i t} = 0$ for all $t < 0$ and therefore $h_\infty \equiv 0$, which contradicts (\ref{eq_hixi_is_1}).
On the other hand, if $a_\infty = 0$, then (\ref{eq_hixi_is_1}) implies $|h_\infty| \leq C' R^E$ for some $C' < \infty$.
Together with the Vanishing Theorem \cite[Theorem~9.8]{BamlerKleiner2017}, this implies $h_\infty \equiv 0$, again contradicting (\ref{eq_hixi_is_1}).
\end{proof}

\begin{corollary}
\label{cor_interior_decay}
For every $1<E<\infty$, $\de>0$ there is an $L'<\infty$ such that if $(M,(g_t)_{t \leq 0})$ is a $\kappa$-solution, $(h_t)$ is a linearized Ricci-DeTurck flow on the parabolic ball $P_L (x_0,t_0):=P(x_0,t_0,LR^{-1/2}(x_0,t_0))$, then 
$$
R^{-E}|h|(x_0,t_0)\leq \de\sup_{P_L(x_0,t_0)} R^{-E}|h|\,.
$$
\end{corollary}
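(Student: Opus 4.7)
The plan is to apply Proposition~\ref{Prop_weighted_semilocal_max} iteratively with $a = 0$. Setting $a = 0$ and choosing $-T := t_0 - L'^2 R^{-1}(x_0,t_0)$ to be the past time boundary of $P_{L'}(x_0,t_0)$, the proposition yields the contraction
\[ \frac{|h|}{R^E}(y, s) \leq \frac{1}{100} \sup_{P_{L_0}(y,s)} \frac{|h|}{R^E} \]
at any $(y,s)$ for which $P_{L_0}(y,s) \subset P_{L'}(x_0,t_0)$ lies strictly above time $-T$, so that the boundary term in Proposition~\ref{Prop_weighted_semilocal_max} vanishes by the final sentence of its statement. Here $L_0 := L(E)$ is the constant from that proposition.

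To iterate, I would fix $k \in \IN$ with $50^{-k} \leq \de$ and inductively construct points $(y_0,s_0) := (x_0,t_0)$ and $(y_{j+1}, s_{j+1}) \in P_{L_0}(y_j, s_j)$ at which $|h|/R^E$ is at least half of its supremum on $P_{L_0}(y_j,s_j)$. Combining the contraction estimate with this selection yields
\[ \frac{|h|}{R^E}(y_j, s_j) \leq \frac{1}{50} \cdot \frac{|h|}{R^E}(y_{j+1}, s_{j+1}), \]
and iterating $k$ times gives
\[ \frac{|h|}{R^E}(x_0,t_0) \leq 50^{-k} \frac{|h|}{R^E}(y_k,s_k) \leq \de \sup_{P_{L'}(x_0,t_0)} \frac{|h|}{R^E}, \]
which is the claim.

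The only nontrivial point is choosing $L'$ large enough, depending only on $E$ and $\de$, so that every $P_{L_0}(y_j, s_j)$ appearing in the iteration is contained in $P_{L'}(x_0,t_0)$ and remains strictly above time $-T$. I would handle this using Perelman's standard curvature estimates on $\kappa$-solutions: on $P_{L'}(x_0,t_0)$ the scalar curvature $R$ is pinched between constants $c(L') R(x_0,t_0)$ and $C(L') R(x_0,t_0)$ depending only on $L'$. Hence each step of the iteration moves $(y_j, s_j)$ by at most a universal multiple of $L_0 R^{-1/2}(x_0,t_0)$ in space and $L_0^2 R^{-1}(x_0,t_0)$ in time, and the total displacement after $k$ steps is therefore at most $O_{L'}(k L_0 R^{-1/2}(x_0,t_0))$ in space and $O_{L'}(k L_0^2 R^{-1}(x_0,t_0))$ in time. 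Since $k$ depends only on $\de$, taking $L'$ sufficiently large in terms of $E$ and $\de$ guarantees the iteration fits inside $P_{L'}(x_0,t_0)$ with room to spare. This geometric bookkeeping is the main obstacle, but it is routine given the known bounds on $\kappa$-solutions.
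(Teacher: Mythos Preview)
Your proposal is correct and matches the paper's approach exactly: the paper's proof is the single sentence ``This follows by iterating Proposition~\ref{Prop_weighted_semilocal_max} with $a=0$,'' and you have simply filled in the details of that iteration. The one cosmetic wrinkle is the apparent circularity in writing the displacement as $O_{L'}(\cdots)$ while still choosing $L'$, but as you note this is resolved by the standard bounded-curvature-at-bounded-distance estimates for $\kappa$-solutions, which let you bound the size of each successive $P_{L_0}(y_j,s_j)$ inductively and hence determine $L'$ from $k$ and $L_0=L(E)$ alone.
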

\begin{proof}
This follows by iterating Proposition~\ref{Prop_weighted_semilocal_max} with $a=0$.
\end{proof}

\section{A Partial Vanishing Theorem for the linearized Ricci-DeTurck flow on $\kappa$-solutions} \label{sec_linear}
In the following we will consider 3-dimensional rotationally symmetric $\kappa$-solutions $(M, (g_t)_{t \leq 0})$, i.e. solutions that are invariant under an $O(3)$-action whose principal orbits are 2-spheres.
Our goal will be to analyze the linearized Ricci-DeTurck flow on these solutions and to deduce that this flow decays modulo certain well-understood modes.

As $(M, (g_t)_{t \leq 0})$ is assumed to be rotationally symmetric, the possible topological types of $M$ are $S^2 \times \IR, \IR^3, S^3, \IR P^3$.
In this section, we will only focus on the non-compact cases, i.e. the cases $M \approx \IR^3$ or $S^2 \times \IR$.
Here we equip $\IR^3$ and $S^2 \times \IR$ with the standard $O(3)$-action.
It is a well known fact that in the case $M \approx S^2 \times \IR$, the flow is  homothetic to the round shrinking cylindrical flow $g_t = dr^2 + 2|t| g_{S^2}$.

We can express $g_t$ as a warped product of the following form, away from the center of rotation if $M \approx \IR^3$:
\begin{equation} \label{eq_WP}
  g_t = p^2(r,t) dr^2 + q^2 (r,t) g_{S^2}. 
\end{equation}
The symmetric $(0,2)$-tensors $h$ that are invariant under the $O(3)$-action always take the similar form
$$ h = \td{p}^2(r) dr^2 + \td{q}^2 (r) g_{S^2}. $$
We will refer to such tensors from now on as \emph{rotationally invariant}.
In the following we will also consider the three coordinate functions $u_1, u_2, u_3 \in C^\infty (S^2)$ for the standard embedding $S^2 \subset \IR^3$.
We will often view these functions as smooth functions on $S^2 \times \IR$ or $\IR^3 \setminus \{ 0\}$.
So $u_1, u_2, u_3$ are constant in time and along radial geodesics.

The following proposition is the main result of this section.
It is similar to the Vanishing Theorem \cite[Theorem 9.8]{BamlerKleiner2017}.
The main difference is that we only assume uniform bounds on $h$ on the initial time-slice, without any weight.
As a result, we can only control $h$ at later times modulo certain modes, which are either rotationally invariant or can be expressed as the Hessian of a scalar function.

\begin{Proposition} \label{Prop_LRDTF_0T}
Let $(M, (g_t)_{t\leq 0})$ be a rotationally symmetric $\kappa$-solution diffeomorphic to $\IR^3$ or $S^2 \times \IR$ and let $m \in \IZ$, $\eta > 0$ and $C, D < \infty$.
Then there is a constant $T = T(m, \eta, C, D, (g_t)) < \infty$ such that the following holds.

Let $(h_t)_{t \in [-T,0]}$ be a uniformly bounded solution to the linearized Ricci-DeTurck flow (\ref{eq_LRDTF}) on  $(M, (g_t)_{t\leq 0})$ and assume that $R(x,0) = 1$ for some $x \in M$ .
Assume that $|\nabla^m h_{-T}| \leq CR^{m/2}$ on $M$ for all $m = 0, \ldots, 3$.
Then on $B(x,0, D)$ we have a decomposition of the form  
$$ h_0 = h^{\rot}_0 + \nabla^2 \big( f_1(r) u_1 + f_2(r)u_2 + f_3(r) u_3 \big) + h'_0, $$
where:
\begin{enumerate}[label=(\alph*)]
\item $h^{\rot}_0$ is rotationally invariant.
\item $f_1(r) u_1 + f_2(r)u_2 + f_3(r) u_3$ is smooth on $B(x,0,D)$.
This implies in particular that  $f_1(r), \lb f_2(r), \lb f_3(r)$ vanish at the origin in the $\IR^3$-case.
\item $\Vert h'_0 \Vert_{C^{m}(B(x,0,D))} \leq \eta$.
\end{enumerate}
\end{Proposition}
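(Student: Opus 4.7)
The plan is to argue by compactness and contradiction: assume the proposition fails, extract a bounded ancient LRDTF solution as a limit of hypothetical counterexamples, and use a rigidity theorem (generalizing Lemma~\ref{lem_partial_vanishing_cylinder} from the cylinder to the given $\kappa$-solution) to conclude that the limit already has the desired Hessian-plus-rotationally-invariant form, yielding a contradiction.

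First I split off the rotationally invariant part. By $O(3)$-equivariance of the background $(g_t)$ and linearity of (\ref{eq_LRDTF}), the $O(3)$-average $h^{\rot}_t$ of $h_t$ and its complement $\bar{h}_t := h_t - h^{\rot}_t$ both satisfy (\ref{eq_LRDTF}), and $\bar{h}_t$ has vanishing $O(3)$-average for every $t$. Declaring $h^{\rot}_0$ to be the first term of the decomposition reduces the proposition to the following claim: for $T$ sufficiently large (depending on $m,\eta,C,D,(g_t)$), $\bar{h}_0$ is $\eta$-close in $C^m(B(x,0,D))$ to a tensor of the form $\nabla^2(f_1(r) u_1 + f_2(r) u_2 + f_3(r) u_3)$ satisfying the smoothness condition (b).

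Suppose this fails: there is a sequence $T_i \to \infty$ with solutions $\bar{h}_{i,t}$ on $[-T_i,0]$ satisfying the stated bounds, but with $\bar{h}_{i,0}$ remaining $\eta$-far from the $\ell=1$ Hessian subspace in $C^m(B(x,0,D))$. The initial bounds $|\nabla^k \bar{h}_{i,-T_i}| \leq C'R^{k/2}$ (inherited from $h_{i,-T_i}$ because $O(3)$-averaging commutes with the isometric action) together with the vanishing $O(3)$-average -- which removes the only mode on which the fiberwise $L^2$ norm could grow under (\ref{eq_LRDTF}), by the monotonicity mechanism in Lemma~\ref{lem_partial_vanishing_cylinder}(a) and its analog along the principal orbits of a general $\kappa$-solution -- yield uniform pointwise bounds for $\bar{h}_{i,t}$. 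Parabolic Schauder estimates on the smooth $\kappa$-solution background upgrade these to uniform $C^k$ bounds on interior parabolic subdomains, and diagonal extraction produces $\bar{h}_{i,t} \to \bar{h}_{\infty,t}$ smoothly on compact subsets of $M \times (-\infty, 0]$, with $\bar{h}_{\infty,t}$ a bounded ancient LRDTF solution on $(M, (g_t))$ of vanishing $O(3)$-average.

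The core rigidity step is to show that $\bar{h}_{\infty,t} = \nabla^2 w_t$ globally, where $w_t = F_1(r,t) u_1 + F_2(r,t) u_2 + F_3(r,t) u_3$ is a bounded ancient heat solution on $(M,(g_t))$, smooth at the origin when $M \approx \IR^3$. In the cylindrical case $M \approx S^2 \times \IR$, this is immediate from Lemma~\ref{lem_partial_vanishing_cylinder}(b): the conclusion $\bar{h}_{\infty,t} = (a_1 u_1 + a_2 u_2 + a_3 u_3) g_{S^2}$ rewrites, using the identity $\nabla^{S^2,2} u_j = - u_j g_{S^2}$ lifted to the warped product $g_t = dr^2 + 2|t|g_{S^2}$, as $\nabla^2\bigl(-(a_1 u_1 + a_2 u_2 + a_3 u_3)\bigr)$. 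In the $\IR^3$ case, I decompose $\bar{h}_{\infty,t}$ into spherical-harmonic eigenmodes on the principal $O(3)$-orbits: the $\ell \geq 2$ modes are killed by a weighted maximum principle modeled on Cases 1 and 2 of the proof of Lemma~\ref{lem_partial_vanishing_cylinder} (the fiber Laplacian eigenvalue dominates the curvature reaction for these modes), and the remaining $\ell = 1$ component is matched with the Hessian of an $\ell = 1$ ancient heat solution via Lemma~\ref{lem_HessianLRDTF}. Smoothness of $\bar{h}_{\infty,t}$ at the origin forces $F_i(0,t) = 0$, giving condition (b). Finally, smooth convergence gives $\bar{h}_{i,0} \to \nabla^2\bigl(F_1(\cdot,0)u_1 + F_2(\cdot,0)u_2 + F_3(\cdot,0)u_3\bigr)$ in $C^m(B(x,0,D))$, contradicting the standing assumption $\|h'_{i,0}\|_{C^m} > \eta$.

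The principal obstacle is the rigidity step when $M \approx \IR^3$: on the cylinder the radial and angular coordinates separate cleanly, but on the warped product $g_t = p^2 dr^2 + q^2 g_{S^2}$ they are coupled through the warping factor $q(r,t)$, so the Hessian-of-$\ell=1$ modes acquire nontrivial $dr\otimes d\theta$ and $dr^2$ components. Matching these to an actual heat solution $w_t$ requires solving a radial parabolic system for $F_i(r,t)$ and verifying existence/uniqueness of bounded ancient solutions with the correct behavior at the origin (smoothness at $r=0$) and at the cylindrical end (to match the limiting data predicted by Lemma~\ref{lem_partial_vanishing_cylinder}).
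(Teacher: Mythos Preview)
Your reduction to the oscillatory part is correct, but the two central steps after that are genuine gaps. The first is the uniform pointwise bound on $\bar h_{i,t}$ over $M\times[-T_i,0]$: you invoke an ``analog'' of the fiberwise $L^2$ monotonicity from Lemma~\ref{lem_partial_vanishing_cylinder}(a) on a general rotationally symmetric $\kappa$-solution, but that analog is neither stated nor proved, and it is not clear it holds. On the cylinder the monotonicity works because the radial direction decouples and the curvature reaction $\Rm(h)$ is constant along fibers; on the Bryant soliton the warping $q(r,t)$ and the sectional curvatures both vary in $r$, so the clean eigenspace argument breaks down. Without this bound you cannot extract a bounded ancient limit on the fixed background $(M,(g_t))$. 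The second gap is the rigidity step you yourself flag: a Liouville theorem classifying bounded ancient LRDTF solutions on the Bryant soliton is essentially as hard as the proposition itself, and the paper never proves such a statement.

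The paper avoids both obstacles by a different mechanism. It first projects further onto the $3$-dimensional $O(3)$-representation (Case~(A)) and its complement (Case~(B)), and in Case~(A) introduces the pointwise quantity $\alpha[h]=|h-\nabla^2((\mathcal F_j h)u_j)|$ measuring the defect from the $\ell=1$ Hessian form, together with the weight $Q=\alpha[h]/\bigl(((|t|+A)R)^E+1\bigr)$. The core is a one-step decay inequality for $QR^c$ (Claim~2 of the final lemma), proved via the semilocal maximum principle of Proposition~\ref{Prop_weighted_semilocal_max}. In the contradiction argument for that claim one subtracts the heat-flow Hessian (Lemma~\ref{Lem_alpha_plus_Hessian}) and rescales; the limit is then \emph{always} either a round cylinder---where Lemma~\ref{lem_l_large_alpha_small} applies directly---or a situation covered by the weighted interior decay of Corollary~\ref{cor_interior_decay}. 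The dichotomy is governed by whether $(|t_i|+A_i)K_i\to\infty$ or stays bounded, and forcing the bounded case onto the cylinder uses (\ref{eq_extra_assumption}), i.e.\ Brendle's Theorem~\ref{Thm_Bryant_uniqueness}, an ingredient your argument never touches. Iterating Claim~2 backwards in time then gives the smallness of $\alpha[h](\cdot,0)$ on compact curvature ranges. Thus no Liouville theorem on the Bryant soliton is needed: the scheme trades that for the semilocal maximum principle plus the already-known cylindrical classification.
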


The proof of Proposition~\ref{Prop_LRDTF_0T} uses the following fact:
\begin{equation} \label{eq_extra_assumption}
 \lim_{t \to -\infty} |t| \max_M R(\cdot,t) = \infty \qquad \text{if} \qquad M \approx \IR^3  . 
\end{equation}
This fact holds due to Theorem~\ref{Thm_Bryant_uniqueness}, which is due to Brendle.
We remark that with some extra work it is possible to remove the dependence on (\ref{eq_extra_assumption}), hence making the proof of Theorem~\ref{Thm_O3_invariance_general} independent of Theorem~\ref{Thm_Bryant_uniqueness}.

Before carrying out the proof of Proposition~\ref{Prop_LRDTF_0T}, we first introduce some general terminology and establish some preliminary lemmas.
For the remainder of this section, we will always assume that we are in the setting of Proposition~\ref{Prop_LRDTF_0T}.

Averaging via the isometric $O(3)$-action yields a decomposition
\[ h_t = h^{\rot}_t + h^{\osc}_t, \]
where $h^{\rot}_t$ is rotationally invariant and the average of $h^{\osc}_t$ under the $O(3)$-action vanishes.
Both components still solve the linearized Ricci DeTurck flow equation.
It therefore remains to prove Proposition~\ref{Prop_LRDTF_0T} in the case in which $h_t = h^{\osc}_t$, with the additional assertion that $h^{\rot}_0 \equiv 0$.

Next, we find a decomposition of $h_t$  of the form
\begin{equation} \label{eq_second_dec}
 h_t = h^{3\text{d},1}_t + h^{3\text{d},2}_t + h^{3\text{d},3}_t + h^{\text{rest}}_t. 
\end{equation}
Here the first three terms describe the component of $h^{\osc}_t$ corresponding to the 3-dimensional representation of $O(3)$.
More specifically, assume that the $O(3)$-action on $M$ is described by the family of diffeomorphisms $(\phi_A : M \to M)_{A \in O(3)}$.
Then for $j = 1,2,3$ we define $h^{3\text{d},j}_t$ to be the image of $h_t$ under the projection
\begin{equation} \label{eq_projection_j}
 h \longmapsto \frac1{|O(3)|} \int_{O(3)} \langle A \vec{e}_j, \vec{e}_j \rangle \, \phi_A^* h \, dA, 
\end{equation}
where we integrate with respect to a bi-invariant measure on $O(3)$.
We also set $h^{\text{rest}}_t := h_t - \sum_{j=1}^3 h^{3\text{d},j}_t$.
Then $h^{3\text{d},j}_t$ and $h^{\text{rest}}_t$ solve the linearized Ricci-DeTurck equation and the image of $h^{\text{rest}}_t$ under the projections (\ref{eq_projection_j}) vanishes.
Due to the decomposition (\ref{eq_second_dec}) it suffices to prove Proposition~\ref{Prop_LRDTF_0T} separately for the following two cases:
\begin{enumerate}[label=(\Alph*)]
\item $h_t^{\rot} \equiv 0$ and $h_t = h^{3\text{d},j}_t$ for some $j = 1, 2, 3$.
\item $h_t^{\rot} \equiv 0$ and $h_t = h^{\text{rest}}_t$.
\end{enumerate}
So let us assume without loss of generality that case (A) or (B) holds.
Case (A) will be more subtle and we will mostly focus on this case.
Case (B) will follow along the lines with small modifications and omissions of several technical details.
We will point out these differences in the course of the proof.

Let us now consider Case (A).
We first need to analyze the structure of the components of $h_t = h^{3\text{d},j}_t$ more carefully.
For this purpose, fix some $t \in [-T,0]$ and reparameterize the radial parameter $r$ such that the representation (\ref{eq_WP}) simplifies to $g_t = dr^2 + q^2(r) g_{S^2}$.
Let $\mu_j := q \, du_j$ and $\nu_j := q \, (*d u_j)$, where the star operator is taken fiberwise with respect to the standard metric on $S^2$.
Note that the maximum of $|\mu_j|_{g_t} = |\nu_j|_{g_t}$ on each cross-sectional 2-sphere is equal to 1.

\begin{Lemma}
We have on $S^2 \times \IR$ or $\IR^3 \setminus \{ 0 \}$
\begin{equation} \label{eq_characterization}
 h_t = h_t^{\textnormal{3d},j} = a_j(r) u_j \, dr^2 + b_j (r) u_j \, g_{S^2} + c_j(r) \, (\mu_j \, dr + dr \, \mu_j) + d_j (r) \, (\nu_j \, dr + dr \, \nu_j), 
\end{equation}
for some smooth radial functions $a_j(r), b_j(r), c_j(r), d_j(r)$, which extend to smooth odd functions across the origin if $M \approx \IR^3$.
\end{Lemma}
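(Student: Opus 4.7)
The plan is to decompose $h_t = h_t^{\text{3d},j}$ in two stages: first according to the warped-product structure $g_t = dr^2 + q^2(r)\,g_{S^2}$, and then by $O(3)$-isotypic component on each spherical fiber.  I would start by splitting any smooth symmetric 2-tensor, away from the origin in the $M \approx \IR^3$ case, as
\[ h = h_{\|}(r,\cdot)\, dr^2 + (\alpha_r \otimes dr + dr \otimes \alpha_r) + h_{\perp, r}, \]
where $h_\|$ is a function on $S^2$, $\alpha_r$ is a 1-form on $S^2$, and $h_{\perp, r}$ is a symmetric 2-tensor on $S^2$, each depending smoothly on $r$.  This splitting is $O(3)$-equivariant (since $O(3)$ acts only on the $S^2$-factor) and is therefore preserved by the projection defining $h_t^{\text{3d},j}$.

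The next step is to decompose each of the three pieces by the $O(3)$-representation theory of $S^2$: spherical harmonics give $\mathcal{H}_1 = \text{span}\{u_1, u_2, u_3\}$ as the $\ell = 1$ component of $C^\infty(S^2)$; the Hodge decomposition $\alpha_r = df_r + *dg_r$ of 1-forms on $S^2$ has $\ell = 1$ bases $du_j$ and $*du_j$; and the trace--traceless splitting $h_{\perp, r} = \phi_r\, g_{S^2} + h_{\perp, r}^{\text{tf}}$ of symmetric 2-tensors contributes at $\ell=1$ only through the trace part, since for $f \in \mathcal{H}_1$ on the round sphere one computes $\nabla^2 f - \tfrac12(\Delta f)\, g_{S^2} \equiv 0$ and $S^2$ carries no nonzero transverse-traceless symmetric 2-tensor.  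Since $h = h_t^{\text{3d},j}$ lies in the image of the projection picking out the $\vec e_j$-indexed basis element of the 3-dimensional isotypic component, only the $u_j$-indexed $\ell = 1$ basis vector survives in each summand.  Absorbing the warped factor $q$ into the normalizations $\mu_j = q\, du_j$ and $\nu_j = q\, (*du_j)$ then produces
\[ h = a_j(r) u_j\, dr^2 + b_j(r) u_j\, g_{S^2} + c_j(r)(\mu_j\, dr + dr\, \mu_j) + d_j(r)(\nu_j\, dr + dr\, \nu_j), \]
with smooth radial coefficients on the interior of the radial parameter domain.

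The hard part will be the smoothness extension across the origin in the $M \approx \IR^3$ case.  Each building block above is singular at the origin when expanded in Cartesian coordinates---for instance $u_j\, dr^2 = (x_j x_k x_l / r^3)\, dx_k\, dx_l$ and $u_j\, g_{S^2} = x_j(r^2\delta_{kl} - x_k x_l)/r^5\, dx_k\, dx_l$---so the smoothness of $h$ as a tensor on $\IR^3$ does not immediately give smoothness of the individual coefficients.  My plan here would be to use the $O(3)$-equivariance, in particular the behavior under the antipodal map (which corresponds to the identification $(r, \omega) \sim (-r, -\omega)$), to pin down the parity of each coefficient, and then to match vanishing orders in Cartesian coordinates so that the pointwise singularities of the building blocks cancel when weighted by the corresponding coefficients.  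This should yield that $a_j, b_j, c_j, d_j$ extend to smooth odd functions of $r \in \IR$, completing the proof.
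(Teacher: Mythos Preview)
Your approach is correct and follows the same overall strategy as the paper: decompose $h$ according to the warped-product structure, then identify the standard $O(3)$-representation inside each tensor type on the spherical fiber. The paper's fiber analysis differs in execution from yours. Rather than invoking Hodge decomposition and explicit $\ell=1$ eigenspace calculations, it argues abstractly that any copy of the standard representation inside scalars, 1-forms, or symmetric 2-tensors on $S^2$ is unique up to scalar: if $\spann\{\tau_1,\tau_2,\tau_3\}$ is such a copy, then $\tau_j$ must be invariant under rotations about the $e_j$-axis, must satisfy $\tau_j'' = -\tau_j$ along great circles through $\pm e_j$, and has parity at $\pm e_j$ determined by tensor rank; these constraints pin $\tau_j$ down. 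Your harmonic-analysis route is more explicit, though your justification that the traceless part $h_{\perp,r}^{\text{tf}}$ carries no $\ell=1$ component is slightly incomplete as stated: the vanishing of the traceless Hessian on $\mathcal{H}_1$ together with the absence of TT tensors does not by itself exhaust the traceless sector, since one also needs that the co-exact piece $\mathcal{L}_{*\nabla g}\,g_{S^2}$ vanishes for $g \in \mathcal{H}_1$ (it does, because $*\nabla u_j$ is a Killing field). Finally, the paper does not actually prove the smooth odd extension of $a_j, b_j, c_j, d_j$ across the origin; it simply asserts it. Your outline for that step already goes further than the paper does.
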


\begin{proof}
In the following, we will omit the index $t$.
It suffices to verify the characterization (\ref{eq_characterization}) along a single $S^2$-fiber.
Along such a fiber we can write $h = f dr^2 + ( \xi \, dr + dr \, \xi) + h^{\Vert}$, where $f \in C^\infty (S^2)$, $\xi$ is a 1-form and $h^{\Vert}$ is a symmetric 2-tensor on $S^2$.
Note that $f$, $\xi$ and $h^{\Vert}$ are contained in the image of the projection (\ref{eq_projection_j}), where $\phi_A$  denotes the standard action on $S^2$ and the pullback has to be taken within the appropriate category.
It remains to prove that $f$ is a multiple of $u_j$, $\xi \in \spann \{ \mu_j, \nu_j \}$ and $h^{\Vert}$ is a multiple of $u_j g_{S^2}$.
This follows from standard representation theory.

More specifically, consider a 3-dimensional representation of $O(3)$ of the form $\spann \{ \tau_1, \tau_2, \tau_3 \}$, where $\tau_j$ are tensor fields on $S^2$ of arbitrary degree.
Assume that the $\tau_j$ are chosen so that there is an equivariant map $\spann \{ \tau_1, \tau_2, \tau_3 \} \to \IR^3$, with $\tau_j \mapsto e_j$, where $O(3)$ acts on $\IR^3$ in the standard way.
Then $\tau_j$ must be invariant by rotations along the $\IR e_j$-axis and $\tau_j$ restricted to any great circle passing through $\pm e_j$ must satisfy the ODE $\tau''_j = - \tau_j$.
If $\tau_j$ is a scalar function or a symmetric $2$-tensor, then restricted to any great circle passing through $\pm \tau_j$ it must be even across $\pm e_j$. 
If $\tau_j$ is a 1-form, then it must be odd.
These properties uniquely determine $\tau_j$ up to a multiplicative constant.
\end{proof}

Given the coefficient functions $a_j(r), b_j(r), c_j(r), d_j(r)$, we define
\[ \mathcal{F}_j h_t = \frac{(\partial_r q) q^2c_j-b_j}{1-(\partial_r q)^2}. \]
Note that $\mathcal{F}_j$ is a zeroth order linear operator on $S^2 \times \IR$ or $\IR^3 \setminus \{0 \}$, respectively and $\mathcal{F}_j h_t$ is a smooth radial function where defined.

\begin{Lemma} \label{eq_Fhu_smooth}
The product $(\mathcal{F}_j h_t) u_j$ extends to a smooth radial function on $M$.
\end{Lemma}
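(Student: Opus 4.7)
The claim is local, and since the denominator $1-(\D_r q)^2$ equals $q^2 K_{\tan}$ where $K_{\tan}$ is the tangential sectional curvature, it is strictly positive on $\IR^3\setminus\{0\}$ and throughout $S^2\times\IR$. So $\mathcal{F}_j h_t$ is smooth wherever the warped-product coordinates are defined, and the only genuine task is to verify smoothness across the origin in the $M\approx\IR^3$ case. In the $M\approx S^2\times\IR$ case, the preceding discussion notes that the flow is homothetic to the round shrinking cylinder, so $q$ is constant in $r$ and $\mathcal{F}_j h_t = -b_j$, whose product with $u_j$ is manifestly smooth.

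For the $\IR^3$-case I would proceed in three steps. First, record the behavior of the warping function: smoothness of the $O(3)$-invariant metric on $\IR^3$ forces $q$ to be smooth and odd in $r$ with $q(r) = r + a_3 r^3 + O(r^5)$, and positivity of $K_{\tan}$ at the origin gives $-6a_3 > 0$. Hence
\[
1-(\D_r q)^2 = r^2\,\tilde D(r), \qquad \tilde D \text{ smooth even}, \qquad \tilde D(0)\neq 0.
\]
By the preceding lemma, $b_j$ and $c_j$ extend to smooth odd functions of $r$; in particular, $c_j(r) = r\,\tilde c_j(r)$ with $\tilde c_j$ smooth and even, automatically.

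Second, the crux: show that $b_j$ vanishes to order $r^3$ at the origin. I would extract this from the smoothness of $h$ as a tensor on $\IR^3$ by evaluating at a pole of $u_j$ on the corresponding coordinate axis. For $j=1$ and the point $(r,0,0)$, the contributions from $a_1 u_1\, dr^2$, $c_1(\mu_1\otimes dr + dr\otimes\mu_1)$, and $d_1(\nu_1\otimes dr + dr\otimes\nu_1)$ to the $(\D_{x_2},\D_{x_2})$-component all vanish there: either $dr(\D_{x_2}) = u_2 = 0$, or $\mu_1(\D_{x_2}),\nu_1(\D_{x_2})$ vanish because $u_1$ attains its maximum on $S^2_r$ at this pole, so $du_1 = *du_1 = 0$. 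Only the $b_1 u_1 g_{S^2}$ term survives, yielding
\[
h(\D_{x_2},\D_{x_2})\big|_{(r,0,0)} = \frac{b_1(r)}{r^2}.
\]
Smoothness of this expression as a function on the $x_1$-axis (using the smooth odd extension of $b_1$ to pass to negative $x_1$) forces the linear coefficient of $b_1$ at $0$ to vanish; hence $b_j(r) = r^3\,\tilde b_j(r)$ with $\tilde b_j$ smooth and even.

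Third, assemble. The numerator $(\D_r q)q^2 c_j - b_j$ now factors as $r^3\,\tilde N(r)$ with $\tilde N$ smooth even, and dividing by $r^2\tilde D(r)$ gives $\mathcal{F}_j h_t = r\,\Phi(r)$ with $\Phi := \tilde N/\tilde D$ smooth even near $0$. Therefore
\[
(\mathcal{F}_j h_t)\,u_j = \Phi(r)\,x_j = \Psi(|x|^2)\,x_j,
\]
where $\Psi$ is the smooth function with $\Phi(r) = \Psi(r^2)$ (smooth even functions of $r$ being smooth functions of $r^2$), which is manifestly smooth across the origin on $\IR^3$. The main obstacle I anticipate is the second step: extracting the non-obvious extra vanishing $b_j(r) = O(r^3)$ from the coordinate-invariant smoothness of $h$ requires choosing the right tensor component and evaluation point so that only the $b_j$-term survives and yields a clean scalar equation; the rest of the argument is bookkeeping with power series.
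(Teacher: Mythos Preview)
Your proof is correct and follows essentially the same route as the paper's: both arguments reduce to showing that $b_j$ vanishes to at least third order at the origin, use strict positivity of sectional curvature to obtain $\partial_r^3 q(0)\neq 0$ (equivalently $a_3\neq 0$), and then conclude that $\mathcal{F}_j h_t$ extends to a smooth odd function of $r$, so that its product with $u_j = x_j/r$ is smooth across the origin. The paper's proof simply asserts that ``$b_j$ is an odd function that vanishes to at least second order'' without explanation; your evaluation of $h(\partial_{x_2},\partial_{x_2})$ at the pole $(r,0,0)$ supplies exactly the missing justification for that step.
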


\begin{proof}
If $M \approx \IR^3$, then $b_j (r)$ is an odd function that vanishes to at least second order.
Moreover, since $(M,g_t)$ has strictly positive sectional curvature, $\partial^3_r q (0) \neq 0$.
It follows that $\mathcal{F}_j h_t$ extends to a smooth odd function across the origin, which implies the statement of the lemma.
\end{proof}

\begin{Lemma}
\label{lem_cal_f_recovers_f}
For any smooth radial function $f(r)$ that extends to a smooth odd function  across the origin when $M \approx \IR^3$, the Hessian $\nabla^2 (f u_j)$ is of the form (\ref{eq_characterization}) and we have
\[ \mathcal{F}_j \big( \nabla^2 (  fu_j)  \big) = f. \]
\end{Lemma}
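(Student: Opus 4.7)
The plan is a direct computation of $\nabla^2(f u_j)$ in warped-product coordinates, followed by substitution into the formula defining $\mathcal{F}_j$. First I would reparameterize $r$ so that $g_t = dr^2 + q^2(r)\, g_{S^2}$ (just as was done before the definition of $\mathcal{F}_j$), and use the standard warped-product Christoffel symbols $\Gamma^r_{\alpha\beta} = -q q'\, (g_{S^2})_{\alpha\beta}$, $\Gamma^\alpha_{r\beta} = (q'/q)\delta^\alpha_\beta$, $\Gamma^\alpha_{\beta\gamma} = \td{\Gamma}^{\,\alpha}_{\beta\gamma}$ (the Christoffel symbol of $g_{S^2}$). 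Setting $\phi := f(r)\, u_j$, a short calculation in any local frame in which the first index is $\partial_r$ and the remaining ones are tangent to the $S^2$-factor then gives
\[ (\nabla^2 \phi)_{rr} = f''(r)\, u_j, \qquad (\nabla^2 \phi)_{r\alpha} = \Big(f'(r) - \tfrac{q'}{q}f(r)\Big)\,\partial_\alpha u_j, \]
while for the angular block I would invoke the classical identity $\nabla^{S^2,2} u_j = -u_j \, g_{S^2}$ to obtain
\[ (\nabla^2 \phi)_{\alpha\beta} = \big(q q' f'(r) - f(r)\big)\, u_j\, (g_{S^2})_{\alpha\beta}. \]

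Next I would read off the four scalar coefficients in representation (\ref{eq_characterization}). Since $(\mu_j)_\alpha = q\,\partial_\alpha u_j$ and $\nu_j = q(*du_j)$ is $g_{S^2}$-orthogonal to $du_j$, matching the mixed block against $c_j(\mu_j dr + dr\,\mu_j) + d_j(\nu_j dr + dr\,\nu_j)$ forces
\[ a_j = f'', \qquad b_j = q q' f' - f, \qquad c_j = \frac{q f' - q' f}{q^2}, \qquad d_j = 0. \]
I would then verify the smoothness requirements needed for $\nabla^2 \phi$ to actually be of the form (\ref{eq_characterization}). The only delicate case is $M \approx \IR^3$, where each coefficient must extend to a smooth odd function across the origin. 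For $a_j = f''$ and $b_j$ this is immediate from the parities of $f$ and $q$ together with $q(0) = 0$. For $c_j$ the point is that $q(0) = f(0) = 0$ and $q'(0) = 1$ force the numerator $q f' - q' f$ to vanish to order at least three while $q^2$ vanishes to order exactly two at $r = 0$, so a one-line Taylor expansion shows that $c_j$ extends smoothly and oddly.

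Finally I would substitute into the definition of $\mathcal{F}_j$ and simplify:
\[ (\partial_r q)\, q^2\, c_j - b_j = q'\big(q f' - q' f\big) - \big(q q' f' - f\big) = \big(1 - (q')^2\big)\, f, \]
so $\mathcal{F}_j(\nabla^2\phi) = f$, as claimed. There is no real obstacle in this argument; it is a bookkeeping exercise. The only step deserving any care is the extension across the origin in the $\IR^3$ case, which the Taylor expansion above handles.
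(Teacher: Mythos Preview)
Your proposal is correct and follows essentially the same approach as the paper: a direct computation of $\nabla^2(fu_j)$ in the warped-product coordinates, reading off the coefficients $a_j,b_j,c_j,d_j$, and substituting into $\mathcal{F}_j$. The paper's proof simply records the resulting formula for $\nabla^2(fu_j)$ without intermediate steps; your version spells out the Christoffel symbols, uses the identity $\nabla^{S^2,2}u_j=-u_j\,g_{S^2}$, and also checks the smooth odd extension of the coefficients at the origin, which the paper omits but which is indeed part of verifying that the Hessian is of the form (\ref{eq_characterization}).
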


\begin{proof}
An elementary computation shows that
\begin{multline*}
 \nabla^2(fu_j)=(\partial^2_r f) u_j\,dr^2+(-fq^{-2}+(\partial_r q) q^{-1} \partial_r f)q^2 \, u_jg_{S^2} \\
+((\partial_r f) q^{-1}-f(\partial_ r q) q^{-2})\,(\mu_j \, dr+dr \, \mu_j). \qedhere
\end{multline*}
\end{proof}

\bigskip
Motivated by the previous lemma, we define in Case (A)
\[ \alpha[h_t] := \Big| h_t - \nabla^2  \big( ( \mathcal{F}_j h_t ) u_j  \big) \Big|. \]
So $\alpha[h_t]$ measures the deviation of $h_t$ from being a Hessian of a specific form.
In Case (B), we simply set $\alpha [ h_t ] := |h_t|$.

\begin{Lemma} \label{Lem_alpha_plus_Hessian}
Assume that we are in Case (A) and assume that $\alpha[h_{t^*}]$ is uniformly bounded for some $t^* \in [-T, 0]$.
Then $(\mathcal{F}_j h_{t^*} ) u_j$ is a smooth scalar function on $M$ that grows at most quadratically at infinity.
Let $(w_t)_{t \in [t^*, 0]}$ be the solution to the heat equation with initial condition $w_{t^*} = (\mathcal{F}_j h_{t*} ) u_j$.
Then
\[ \td{h}_t := h_t - \nabla^2 w_t \]
is a uniformly bounded solution to the linearized Ricci-DeTurck flow with $\alpha[\td{h}] = \alpha[h]$ on $M \times [t^*, 0]$ and $|\td{h}_{t^*}| =\alpha [\td{h}_{t^*}]$.
\end{Lemma}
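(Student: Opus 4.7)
\smallskip

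\noindent\textbf{Proof proposal.} The plan is to justify each clause of the conclusion in turn, using the two algebraic identities already established (Lemmas~\ref{eq_Fhu_smooth} and \ref{lem_cal_f_recovers_f}) together with $O(3)$-equivariance of the heat equation and Lemma~\ref{lem_HessianLRDTF}.

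First I would verify that the initial datum $w_{t^*}:=(\mathcal{F}_j h_{t^*}) u_j$ is admissible. Smoothness on all of $M$ is immediate from Lemma~\ref{eq_Fhu_smooth}. For the growth bound, the hypothesis that $(h_t)$ is uniformly bounded gives a uniform bound on $|h_{t^*}|$, while $\alpha[h_{t^*}]=|h_{t^*}-\nabla^2 w_{t^*}|$ is uniformly bounded by assumption; subtracting gives a uniform bound on $|\nabla^2 w_{t^*}|$ on $M$. Since a scalar function on a complete manifold with uniformly bounded Hessian grows at most quadratically in the distance to any basepoint, $w_{t^*}$ grows at most quadratically.

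Next I would produce $w_t$ on $[t^*,0]$. Since $(M,g_t)$ is a $\kappa$-solution, on the compact time interval $[t^*,0]$ the curvature is bounded and the geometry is uniformly controlled, so the scalar heat equation with initial data of at most quadratic growth has a unique smooth solution (by standard parabolic theory on manifolds of bounded geometry). Crucially, because the $O(3)$-action is an isometry of each $g_t$ and the heat equation is $O(3)$-equivariant, the solution inherits the $O(3)$-transformation behaviour of its initial data: since $w_{t^*}$ lies in the one-dimensional space of scalar functions of the form $f(r) u_j$ inside the $j$-th copy of the 3-dimensional $O(3)$-representation, we obtain $w_t=f_t(r)u_j$ for all $t\in[t^*,0]$ and some smooth radial $f_t$ (extending oddly across the origin in the $\IR^3$ case). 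By Lemma~\ref{lem_HessianLRDTF}, $\nabla^2 w_t$ solves the linearized Ricci-DeTurck equation~(\ref{eq_LRDTF}), hence so does $\tilde h_t:=h_t-\nabla^2 w_t$.

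For uniform boundedness of $\tilde h_t$, it suffices to bound $|\nabla^2 w_t|$ on $M\times[t^*,0]$. Since $\nabla^2 w_t$ solves (\ref{eq_LRDTF}) with uniformly bounded initial value at $t=t^*$, and since the zero-order term $2\Rm_{g_t}(h)$ satisfies $|2\Rm_{g_t}(h)|\leq C|h|$ with $C$ controlled by $\sup_{M\times[t^*,0]}|\Rm|$, the scalar function $|\nabla^2 w_t|^2$ satisfies a differential inequality of the form $(\partial_t-\Delta)|\nabla^2 w|^2\leq C'|\nabla^2 w|^2$, so the maximum principle yields a uniform bound on $[t^*,0]$. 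Combining with the hypothesized bound on $h_t$ gives uniform boundedness of $\tilde h_t$.

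Finally I would verify the two algebraic identities. Because $w_t=f_t(r)u_j$, Lemma~\ref{lem_cal_f_recovers_f} gives $\mathcal{F}_j(\nabla^2 w_t)=f_t$, so
\[
\nabla^2\big((\mathcal{F}_j\tilde h_t)\, u_j\big)=\nabla^2\big((\mathcal{F}_j h_t)\, u_j\big)-\nabla^2\big(f_t u_j\big)=\nabla^2\big((\mathcal{F}_j h_t)\, u_j\big)-\nabla^2 w_t,
\]
and substituting into the definition of $\alpha$ yields $\alpha[\tilde h_t]=|h_t-\nabla^2((\mathcal{F}_jh_t)u_j)|=\alpha[h_t]$. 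At $t=t^*$, by construction $\tilde h_{t^*}=h_{t^*}-\nabla^2 w_{t^*}=h_{t^*}-\nabla^2((\mathcal{F}_jh_{t^*})u_j)$, so $|\tilde h_{t^*}|=\alpha[h_{t^*}]=\alpha[\tilde h_{t^*}]$, as required. I expect the main subtlety to be the existence and boundedness of $w_t$: one must choose the correct function space (scalar functions of at most quadratic growth) in which uniqueness is available and in which the $O(3)$-equivariance argument is valid, so that $w_t$ retains the form $f_t(r)u_j$ and $\mathcal{F}_j$ commutes with the construction; everything else is bookkeeping via Lemmas~\ref{lem_HessianLRDTF} and \ref{lem_cal_f_recovers_f}.
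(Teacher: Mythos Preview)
Your proposal is correct and follows essentially the same route as the paper's proof: smoothness via Lemma~\ref{eq_Fhu_smooth}, the Hessian bound $|\nabla^2 w_{t^*}|\le |h_{t^*}|+\alpha[h_{t^*}]$ giving quadratic growth, Lemma~\ref{lem_HessianLRDTF} for the flow equation, and Lemma~\ref{lem_cal_f_recovers_f} for the identity $\alpha[\tilde h]=\alpha[h]$. You supply more detail than the paper does on two points it leaves implicit---the $O(3)$-equivariance ensuring $w_t=f_t(r)u_j$ (needed to apply Lemma~\ref{lem_cal_f_recovers_f} at later times) and the maximum-principle bound on $|\nabla^2 w_t|$---but the argument is the same.
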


\begin{proof}
The fact that $w_{t^*} = (\mathcal{F}_j h_{t^*} ) u_j$ is smooth follows from Lemma~\ref{eq_Fhu_smooth}.
Next, note that $|\nabla^2 w_{t^*}| \leq |h_{t*}| + \alpha[h_{t^*}]$ is uniformly bounded.
So $w_{t^*}$ grows at most quadratically at infinity and $|\nabla^2 w|$ remains uniformly bounded on $M \times [t^*,0]$.
Lemma~\ref{lem_HessianLRDTF} implies that $h_{t} - \nabla^2 w_t$ solves the linearized Ricci-DeTurck flow.
Next, Lemma~\ref{lem_cal_f_recovers_f} yields
\[ (\mathcal{F}_j \td{h}_t)u_j = \big( \mathcal{F}_j (h_t - \nabla^2 w_t)\big)u_j = (\mathcal{F}_j h_t)u_j - w_t,   \]
which implies that $\alpha[\td{h}_t] = |\td{h}_t - \nabla^2 ((\mathcal{F}_j h_t) u_j) + \nabla^2 w_t| = \alpha[h_t]$.
The last statement follows by definition of $\alpha$.
\end{proof}

\begin{lemma}
\label{lem_l_large_alpha_small}
For every $\de>0$ there is a $\Theta=\Theta (\de)<\infty$ such that if $(h_t)_{t\in [-\Theta,0]}$ is a bounded linearized Ricci-DeTurck flow on the shrinking round cylinder  $(S^2\times\R,(g_t)_{t\leq 0})$ with $R(\cdot,0)=1$ and $h^{\rot} \equiv 0$, then $\al[h](x,0)\leq \de \sup_M |h_{-\Theta}|$.  
\end{lemma}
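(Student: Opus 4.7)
My plan is to argue by contradiction, combining a compactness argument with the ancient-solution classification of Lemma~\ref{lem_partial_vanishing_cylinder}. Suppose the conclusion fails: then there exist $\delta_0 > 0$, $\Theta_i \to \infty$, and bounded solutions $(h_{i,t})_{t\in[-\Theta_i,0]}$ of (\ref{eq_LRDTF}) on the shrinking round cylinder with $h_i^{\rot}\equiv 0$ and, after rescaling, $\sup_M |h_{i,-\Theta_i}|=1$, yet $\alpha[h_i](x,0)>\delta_0$. Since the projections (\ref{eq_projection_j}) commute with the linearized Ricci-DeTurck flow, I would further reduce to the two cases treated earlier: either $h_i = h_i^{3\text{d},j}$ for a fixed $j \in \{1,2,3\}$ (Case (A)), or $h_i = h_i^{\text{rest}}$ (Case (B)).

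The next step is to extract an ancient subsequential limit. Lemma~\ref{lem_partial_vanishing_cylinder}(a) gives $\sup_{r\in\R}\|h_{i,t}\|_{L^2(S^2\times\{r\})}\leq 1$ for all $t\in[-\Theta_i,0]$. Because the cylindrical background has uniformly bounded geometry and (\ref{eq_LRDTF}) is a linear parabolic equation with smooth bounded coefficients, I would upgrade this fiberwise $L^2$ bound to uniform $C^k$ estimates for $h_i$ on every compact subset of $S^2\times\R\times(-\infty,0]$ via standard interior parabolic regularity (the terminal time $t=0$ causes no loss since parabolic smoothing runs forward in time). Passing to a diagonal subsequence yields a smooth limit $(h_{\infty,t})_{t\in(-\infty,0]}$ that is a bounded ancient solution of (\ref{eq_LRDTF}), still has $h_\infty^{\rot}\equiv 0$ and $\sup_{(r,t)}\|h_{\infty,t}\|_{L^2(S^2\times\{r\})}\leq 1$, belongs to the same case (A) or (B), and (since $\mathcal{F}_j$ is a zeroth-order operator whose coefficients depend smoothly on $g_t$) satisfies $\alpha[h_\infty](x,0)\geq\delta_0$.

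Lemma~\ref{lem_partial_vanishing_cylinder}(b) now forces $h_\infty = (a_1 u_1 + a_2 u_2 + a_3 u_3)\,g_{S^2}$ for some $a_1,a_2,a_3\in\R$. In Case (B) this expression lies in the direct sum of the three $3$-dimensional $O(3)$-representations carved out by (\ref{eq_projection_j}), while $h_\infty = h_\infty^{\text{rest}}$ has vanishing projection onto each of them; so $h_\infty\equiv 0$ and $\alpha[h_\infty](x,0)=0$, contradicting $\alpha[h_\infty](x,0)\geq\delta_0$. In Case (A) only the $j$-th summand survives, so $h_\infty = a_j u_j\,g_{S^2}$; using $\partial_r q\equiv 0$ on the round cylinder, the formula for $\mathcal{F}_j$ gives $\mathcal{F}_j h_\infty = -a_j$, and Lemma~\ref{lem_cal_f_recovers_f} then gives $\nabla^2(-a_j u_j) = a_j u_j\,g_{S^2} = h_\infty$, whence $\alpha[h_\infty](x,0)=0$, again a contradiction.

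I expect the main technical obstacle to be the compactness step: converting the fiberwise $L^2$ control from Lemma~\ref{lem_partial_vanishing_cylinder}(a) into uniform higher-derivative bounds strong enough to pass to a smooth ancient limit on $S^2\times\R\times(-\infty,0]$, and verifying that the algebraic projections defining Cases (A) and (B), together with the operator $\mathcal{F}_j$, pass cleanly to the limit so that the lower bound $\alpha[h_\infty](x,0)\geq\delta_0$ is actually inherited. The remaining ingredients are then immediate applications of results already in hand.
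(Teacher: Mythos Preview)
Your proposal is correct and follows essentially the same approach as the paper's proof: contradiction, normalize $\sup_M|h_{i,-\Theta_i}|\leq 1$, use Lemma~\ref{lem_partial_vanishing_cylinder}(a) to propagate the fiberwise $L^2$ bound, extract an ancient limit, and invoke Lemma~\ref{lem_partial_vanishing_cylinder}(b) together with Lemma~\ref{lem_cal_f_recovers_f} to force $\alpha[h_\infty](\cdot,0)=0$. Your version is slightly more explicit than the paper's---you spell out the reduction to Cases (A)/(B) before passing to the limit and you note the parabolic-regularity step needed to upgrade fiberwise $L^2$ control to local $C^k$ bounds---but these are exactly the details the paper leaves implicit, and your computation $\mathcal{F}_j(a_ju_jg_{S^2})=-a_j$, $\nabla^2(-a_ju_j)=a_ju_jg_{S^2}$ on the cylinder is correct.
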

\begin{proof}
Suppose not.  
Then for some $\de>0$, there is a sequence $\Theta_i\ra\infty$ and for every $i$ a linearized Ricci-DeTurck flow $(h_{i,t})_{t\in [-\Theta_i,0]}$ such that $\al[h_i](x,0)\geq \de $ and $\sup_M |h_{i,-\Theta_i}|\leq 1$.
Using the fiberwise $L^2$-norm (see Definition~\ref{def_fiberwise_l2_norm}), we have
\begin{equation}
{\label{eqn_tilde_h_l2}
}\sup_{r\in\R}\| h_{i,-\Theta_i}\|_{L^2(S^2\times\{r\})}\leq \sup_{S^2\times\R}|h_{i,-\Theta_i}|\leq 1,
\end{equation}
so by assertion (a) of Lemma~\ref{lem_partial_vanishing_cylinder} we have $\sup_{(r,t)\in\R\times [-\Theta_i,0]}\| h_{i,t}\|_{L^2(S^2\times\{r\})}\leq 1$.  
By (\ref{eqn_tilde_h_l2}) we may extract a limiting linearized Ricci-DeTurck flow $( h_{\infty,t})_{t\leq 0}$ such that 
\begin{equation}
\label{eqn_h_infty_l2}
\al[h_\infty](x,0)\geq \de \qquad\text{and}\qquad\sup_{(r,t)\in\R\times(-\infty,0]}\|h_{\infty, t}\|_{L^2(S^2\times\{r\})}\leq 1\,.
\end{equation}
By assertion (b) of Lemma~\ref{lem_partial_vanishing_cylinder} and Lemma~\ref{lem_cal_f_recovers_f} we would have $\nabla^2 ( \sum_{j=1}^3   ( \mathcal{F}_j h_{\infty,0} ) u_j ) =h_{\infty,0}$, and hence $\al[ h_{\infty,0}]\equiv 0$, contradicting (\ref{eqn_h_infty_l2}).
\end{proof}

The following lemma will reduce the proof of Proposition~\ref{Prop_LRDTF_0T} to two elementary bounds on $\alpha[h]$.

\begin{lemma} \label{lem_reduce_proof}
For any $m \in \IN$, $\eta > 0$ and $D, C' < \infty$ there is a $\delta = \delta (m, \eta, D, C') > 0$ such that the following holds:
If $\alpha[h] (\cdot, -1) \leq C'$ on $M$ and $\alpha[h] (\cdot, -1) \leq \delta$ on $\{ \delta \leq R(\cdot, -1) \leq \delta^{-1} \}$, then the conclusion of Proposition~\ref{Prop_LRDTF_0T} holds.
\end{lemma}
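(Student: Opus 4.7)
We produce a decomposition $h_0 = \nabla^2 w_0 + \td h_0$ on $B(x,0,D)$ (plus the zero rotationally invariant part, by the Case (A)/(B) reduction), where $\nabla^2 w_0$ is a Hessian of the required form and $\|\td h_0\|_{C^m(B(x,0,D))}\leq \eta$. The Hessian comes from evolving a suitable scalar by heat flow on $[-1,0]$, and the smallness of the remainder is propagated from time $-1$ to time $0$ via (iteration of) the semilocal maximum principle.

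\emph{Reduction.} In Case (A), apply Lemma~\ref{Lem_alpha_plus_Hessian} at $t^* = -1$: the function $(\mathcal F_j h_{-1})\,u_j$ is smooth (Lemma~\ref{eq_Fhu_smooth}) and grows at most quadratically since $\alpha[h_{-1}] \leq C'$, so its heat-equation evolution $(w_t)_{t \in [-1,0]}$ is well-defined, and $\td h_t := h_t - \nabla^2 w_t$ is a linearized Ricci--DeTurck flow with $|\td h_{-1}| = \alpha[h_{-1}]$ and $\alpha[\td h_t] = \alpha[h_t]$. In Case (B), set $\td h := h$, $w\equiv 0$. Since $(g_t)$ is rotationally symmetric and heat flow commutes with the $O(3)$-action, $w_0$ takes the prescribed form $f_j(r)\,u_j$ with $f_j$ smooth on $B(x,0,D)$. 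The proof thus reduces to establishing $\|\td h_0\|_{C^m(B(x,0,D))} \leq \eta$.

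\emph{Propagation.} Fix $E > 1$. By iterating Proposition~\ref{Prop_weighted_semilocal_max} with $a = 0$ (cf.\ Corollary~\ref{cor_interior_decay}), choose an iteration depth $k$ and corresponding parabolic radius $L < \infty$ so that the accumulated decay factor $(1/100)^k$ beats the a priori $L^\infty$-bound $C'' = C''(C', (g_t)) < \infty$ for $|\td h|$ on $M \times [-1,0]$ (obtained from a Gr\"onwall estimate applied to $|\td h|^2$ on the bounded-curvature background). By the bounded geometry of the $\kappa$-solution, there exist constants $0 < c_1 \leq c_2 < \infty$, depending only on $D$, $L$ and $(g_t)$, with $c_1 \leq R(\cdot, -1) \leq c_2$ on the ball $B(x, -1, D + 100L)$, which in turn contains the projection to $\{t = -1\}$ of every iterated backward parabolic neighborhood appearing in the argument. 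Finally choose $\delta < \min(c_1,\, c_2^{-1},\, \eta/C_1)$, with $C_1$ an iteration-constant, so that this ball lies inside the intermediate-curvature region $\{\delta \leq R \leq \delta^{-1}\}$ and thus $|\td h_{-1}| \leq \delta$ on it. Iterating Proposition~\ref{Prop_weighted_semilocal_max} $k$ times at each $(y,0)$ with $y \in B(x,0,D)$ yields $|\td h_0|(y) \leq \eta/2$, and standard Shi-type interior parabolic regularity, applied on a slightly enlarged ball, upgrades this $L^\infty$-bound to the desired $\|\td h_0\|_{C^m(B(x,0,D))} \leq \eta$.

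\emph{Main obstacle.} The delicate point is the coupled sequential choice of the parameters $k$, $L$, and $\delta$: the iteration depth $k$ must be large enough for $(1/100)^k$ to defeat the a priori global bound $C''$, the parabolic radius $L = L(k)$ grows accordingly, and $\delta$ must then be small enough that the entire iterated domain of dependence at time $-1$ lies inside the intermediate-curvature region supplied by the hypothesis. The bounded geometry of the $\kappa$-solution on compact subsets of $M\times(-\infty,0]$ is what makes this sequential choice possible; once it is carried out, the remaining propagation and regularity steps are standard.
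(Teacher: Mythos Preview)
Your reduction step (subtracting $\nabla^2 w_t$ via Lemma~\ref{Lem_alpha_plus_Hessian} so that $|\td h_{-1}|=\alpha[h_{-1}]$) matches the paper's. The paper then proceeds by contradiction and compactness: for a sequence $\delta_i\to 0$ of counterexamples one has $|\td h_{i,-1}|\le C'$ globally while $\td h_{i,-1}\to 0$ locally uniformly, so by a standard limit argument (precompactness plus uniqueness of bounded solutions to linear parabolic equations on bounded-geometry backgrounds) $\td h_{i,0}\to 0$ in $C^m_{\rm loc}$, a contradiction. You instead attempt a direct quantitative bound via iteration of Proposition~\ref{Prop_weighted_semilocal_max}.

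There is a real gap in your iteration. With $a=0$ the iterated quantity is $R^{-E}|\td h|$, not $|\td h|$; after $k$ steps the bound reads
\[
R^{-E}|\td h|(y,0)\le(1/100)^k\sup_{\Omega_k}R^{-E}|\td h|+2C\sup_{\Omega_k\cap\{t=-1\}}R^{-E}|\td h|,
\]
where $\Omega_k$ is the $k$-fold iterated parabolic region. Bounded curvature at bounded distance only gives $\inf_{\Omega_k}R\ge c(L)^k c_D$, so $\sup_{\Omega_k}R^{-E}|\td h|\le c(L)^{-kE}c_D^{-E}C''$, and the first term decays only if $c(L)^E>1/100$ --- an inequality depending on the unspecified constant $L(E)$ from Proposition~\ref{Prop_weighted_semilocal_max} that you do not verify. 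Relatedly, on $M\approx\IR^3$ the successive parabolic radii $LR^{-1/2}(y_j,s_j)$ may grow geometrically as the iteration drifts toward the low-curvature end, so your assertion that every iterated neighborhood projects into the fixed ball $B(x,-1,D+100L)$, with $c_1,c_2$ independent of $k$, is not justified. The paper's limit argument sidesteps these bookkeeping issues entirely; if you want a direct route, a Gaussian heat-kernel upper bound for the subsolution $|\td h|^2$, or running Proposition~\ref{Prop_weighted_semilocal_max} with some fixed $a>0$ (so the weight $(R^E+a^E)^{-1}$ is globally bounded) and then choosing $\delta$ after fixing $k$, would close the gap.
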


\begin{proof}
As discussed earlier, we may assume that we are either in Case (A) or in Case (B).
Assume that the lemma was wrong for some fixed $m, \eta, D, C'$ and pick a sequence of counterexamples $(h_i)_{t \in [-1, 0]}$ for a sequence $\delta_i \to 0$.
In Case (A) set $\td{h}_{i,t} := h_{i,t} - \nabla^2 w_{i,t}$, where $(w_{i,t})_{t \in [-1,0]}$ is a solution to the heat equation with initial condition $w_{i,-1} =  (\mathcal{F}_j h_{i,-1} ) u_j$, as explained in Lemma~\ref{Lem_alpha_plus_Hessian}.
In Case (B) set $\td{h}_{i, -1} := h_{i,-1}$.
Then $|\td{h}_{i,-1}| \leq C'$ on $M$ and $|\td{h}_{i,-1}| \leq \delta_i$ on $\{ \delta_i \leq R(\cdot, -1) \leq \delta_i^{-1} \}$.
It now follows from a standard limit argument that $\td{h}_{i,0} \to 0$ locally uniformly in $C^m$, which implies assertion (c) of Proposition~\ref{Prop_LRDTF_0T} for $h'_0 = \td{h}_{i,0}$ for large $i$, in contradiction to our assumption.
\end{proof}

Due to Lemma~\ref{lem_reduce_proof}, Proposition~\ref{Prop_LRDTF_0T} is a consequence of the following lemma.

\begin{lemma}
Let $(M, (g_t)_{t \leq 0})$ be a rotationally symmetric $\kappa$-solution diffeomorphic to $\IR^3$ or $S^2 \times \IR$ and let $\delta > 0$.
Then there are constants $C' = C'(g_t), T = T(\delta, (g_t)) < \infty$ such that the following holds.
Suppose that $(h_t)_{t \in [-T,0]}$ is a uniformly bounded solution to the linearized Ricci-DeTurck flow with $|\nabla^m h_{-T}| \leq R^{m/2}$ for all $m = 0, \ldots, 3$.
Assume that the assumptions of Case (A) or (B) hold.
Then
\[ \alpha[h](\cdot, 0) \leq C' \quad \text{on} \quad M \qquad \text{and} \qquad \alpha[h] (\cdot, 0) \leq \delta \quad \text{on} \quad \{ \delta \leq R(\cdot, 0) \leq \delta^{-1} \}. \]
\end{lemma}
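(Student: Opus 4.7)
Our plan splits according to the two claimed bounds.

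For the uniform bound $\alpha[h](\cdot,0)\leq C'$ on $M$, I would first upgrade the initial estimate $|h_{-T}|\leq 1$ to a uniform pointwise bound on $|h|$ over $M\times[-T,0]$ via the semilocal maximum principle (Proposition~\ref{Prop_weighted_semilocal_max}) applied with appropriate weight, exploiting the fact that the scalar curvature is uniformly bounded on the fixed $\kappa$-solution background (either the Bryant soliton or the round shrinking cylinder). In Case (B) this already gives $\alpha[h](\cdot,0)=|h|(\cdot,0)\leq C'$. In Case (A), the derivative hypotheses $|\nabla^m h_{-T}|\leq R^{m/2}$ together with the curvature bounds of the background force $\alpha[h_{-T}]$ to be uniformly bounded, so Lemma~\ref{Lem_alpha_plus_Hessian} applied at $t^*=-T$ yields a decomposition $h_t=\widetilde{h}_t+\nabla^2 w_t$ with $\alpha[\widetilde{h}]=\alpha[h]$ and $|\widetilde{h}_{-T}|$ uniformly bounded. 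The same semilocal maximum principle propagates this to a uniform bound on $\widetilde{h}$ over $[-T,0]$, and interior parabolic regularity then bounds the components of $\widetilde{h}_0$ and their first two derivatives, whence $\alpha[\widetilde{h}](\cdot,0)\leq C'$.

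The second bound $\alpha[h](\cdot,0)\leq\delta$ on $\{\delta\leq R(\cdot,0)\leq\delta^{-1}\}$ is the substantive part and I would prove it by contradiction and compactness. Suppose not: there exist $T_i\to\infty$ and flows $(h^{(i)}_t)_{t\in[-T_i,0]}$ satisfying the hypotheses with $\alpha[h^{(i)}](x_i,0)>\delta$ for points $x_i$ in the compact region $\{\delta\leq R(\cdot,0)\leq\delta^{-1}\}$. Pass to a subsequence with $x_i\to x_\infty$. In Case (A), apply Lemma~\ref{Lem_alpha_plus_Hessian} at $t^*=-T_i$ to reduce to $\widetilde{h}^{(i)}$ with $\alpha[\widetilde{h}^{(i)}]=\alpha[h^{(i)}]$ and $|\widetilde{h}^{(i)}_{-T_i}|$ uniformly bounded; in Case (B), set $\widetilde{h}^{(i)}=h^{(i)}$. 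Either way, $|\widetilde{h}^{(i)}|$ is uniformly bounded on $[-T_i,0]$, and by parabolic regularity plus a diagonal argument a subsequence converges smoothly on compact subsets of $M\times(-\infty,0]$ to a bounded ancient linearized Ricci--DeTurck flow $\widetilde{h}^{(\infty)}$ on $(M,(g_t)_{t\leq 0})$ satisfying $\alpha[\widetilde{h}^{(\infty)}](x_\infty,0)\geq\delta$.

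The contradiction then comes from showing $\alpha[\widetilde{h}^{(\infty)}]\equiv 0$. In the cylinder case $M\approx S^2\times\R$, this is immediate: Lemma~\ref{lem_partial_vanishing_cylinder}(b) forces $\widetilde{h}^{(\infty)}_0=(a_1u_1+a_2u_2+a_3u_3)g_{S^2}$, which by Lemma~\ref{lem_cal_f_recovers_f} is a Hessian of the prescribed form, so $\alpha\equiv 0$. In the Bryant case $M\approx\IR^3$, I would use (\ref{eq_extra_assumption}) together with the cylindrical asymptotics of the Bryant soliton: pulling back $\widetilde{h}^{(\infty)}$ via a sequence of diffeomorphisms sending the basepoint toward spatial infinity (where the Bryant soliton converges in pointed Cheeger--Gromov sense to a round shrinking cylinder) gives, in a subsequential limit, a bounded ancient LRDTF on the cylinder to which the preceding vanishing applies. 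The main obstacle lies here: one must carefully orchestrate this Bryant-to-cylinder rescaling so that (\ref{eq_extra_assumption}) guarantees arbitrarily long time intervals in the rescaled picture, and one must verify that the uniform boundedness together with the nonvanishing $\alpha[\widetilde{h}^{(\infty)}](x_\infty,0)\geq\delta$ survive the limit, carefully tracking how the operator $\mathcal{F}_j$ (defined through the warping function $q$) transforms as the warped-product geometry degenerates into the cylinder.
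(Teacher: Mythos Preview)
Your cylinder case is fine and matches the paper. In the $\IR^3$ (Bryant) case, however, the argument has a genuine gap that affects both bounds.

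For the first bound, note that $C' = C'(g_t)$ must be independent of $T$ (this matters: the lemma feeds into Lemma~\ref{lem_reduce_proof}, where $\delta$ is chosen \emph{after} $C'$, while $T$ depends on $\delta$). Your scheme---subtract the Hessian at $t^* = -T$ via Lemma~\ref{Lem_alpha_plus_Hessian}, then propagate $|\td h_{-T}| \leq C_1$ forward by the semilocal maximum principle---does not give a $T$-independent bound. The weighted form of Proposition~\ref{Prop_weighted_semilocal_max} with $a>0$ introduces a factor $e^{HaT}$, and with $a=0$ the quantity $R^{-E}|\td h|$ is unbounded on the Bryant soliton since $R\to 0$ at infinity. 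The underlying reason is that the linearized Ricci--DeTurck flow on the Bryant soliton has non-decaying (Hessian) modes; subtracting the Hessian at time $-T$ does not prevent a Hessian component from regrowing over the long interval $[-T,0]$. The same unproven claim---that $|\td h^{(i)}|$ is uniformly bounded on $M\times[-T_i,0]$ independently of $i$---is what your compactness step for the second bound rests on. And even granting it, your endgame (a bounded ancient solution on the Bryant must have $\alpha\equiv 0$, argued by rescaling toward the cylindrical end) is essentially as hard as the lemma itself and leaves the tip region uncontrolled.

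The paper resolves both issues with a single device: the time-weighted quantity $Q := \alpha[h]/\big(((|t|+A)R)^E+1\big)$. It proves (Claim~2) a one-step decay $(QR^c)(x,t)\leq\tfrac{1}{10}\sup_M(QR^c)(\cdot,t^*)$ where $t^* = t-\Theta R^{-1}(x,t)$ is a \emph{bounded} backward step; the Hessian subtraction of Lemma~\ref{Lem_alpha_plus_Hessian} is performed at $t^*$, so $\td h$ only needs to be controlled on the fixed-length interval $[t^*,t]$, bypassing any long-time propagation. The contradiction argument for Claim~2 then produces, after rescaling, either a limit with $|\td h_\infty|\leq CR^{E-c}$ (killed by Corollary~\ref{cor_interior_decay}) or a cylindrical limit (killed by Lemma~\ref{lem_l_large_alpha_small}), the dichotomy being governed by whether $\lim(|t_i|+A_i)K_i$ is infinite---which is where (\ref{eq_extra_assumption}) enters. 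The first bound then follows from $Q\leq C^*$ at $t=0$, independently of $T$; the second from iterating Claim~2 with $c=\ov c>0$ along a backward sequence of times, which can be continued for arbitrarily many steps once $T$ is large.
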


\begin{proof}
Note that in the cylindrical case, the lemma is a direct consequence of Lemma \ref{lem_l_large_alpha_small}.
So it suffices to consider the case $M \approx \IR^3$.

Fix $\delta$ and $(M, (g_t)_{t\leq 0})$ for the remainder of the proof.
The constant $T$ will be determined in the end of this proof.
Assume that $(h_t)_{t \in [-T,0]}$ is given and consider the isometric $O(3)$-action on $(M, (g_t)_{t\leq 0})$.

\begin{Claim1}
For any $\Theta < \infty$ there is a constant $C^* = C^*(\Theta) < \infty$ such that for all $(x,t) \in M \times [-T,0]$ with $t - \Theta R^{-1} (x,t) \leq -T$ we have $\alpha [h] (x,t) \leq C^*$.
\end{Claim1}

\begin{proof}
Fix some $E' > 1$ and choose $H=H(E')$ according to Proposition~\ref{Prop_weighted_semilocal_max}.
Let $a = R(x,t)$ and consider the quantity $Q' := e^{-Ha t} \frac{|h|}{R^{E'} + a^{E'}}$.
Choose $(y,s) \in M \times [-T, 0]$ such that $2Q'(y,s) > S := \sup_{M \times [-T,0]} Q'$.
Then, by Proposition~\ref{Prop_weighted_semilocal_max} we have
\[ S < 2Q'(y,s) \leq \frac{2}{100} S + 2C \sup_M Q'(\cdot, -T), \]
where $C= C(E')$.
So $S \leq 4C \sup_{M} Q'(\cdot, -T)$.
This implies that for any $(x',t') \in M \times [-T,t]$
\[ \frac{|h|(x',t')}{2 R^{E'} (x',t')} \leq 4C e^{H\Theta} \cdot \frac{1}{R^{E'}(x',t')} \]
So $|h| \leq 8 C e^{H\Theta}$ on $M \times [-T, t]$.
Using the derivative bounds of $h$ at time $-T$ and standard local derivative estimates (see for example \cite[Lemma~A.14]{BamlerKleiner2017}), we can upgrade this bound to a derivative bound at time $t$ and therefore, we obtain a bound on $\alpha[h](x,t)$.
\end{proof}

Fix some arbitrary constant $E > 1$ and let $A < \infty$ be a constant that will be determined in the following claim.
Consider the following quantity on $M \times [-T,0]$
\[ Q := \frac{\alpha [h]}{((|t|+A) R)^{E} + 1}. \]

\begin{Claim2}
There are constants $\Theta  = \Theta (E), A = A(E) < \infty$ and  $\ov{c} = \ov{c}(E) > 0$  such for any $(x,t) \in M \times [-T,0]$ with $t^* := t - \Theta R^{-1} (x,t) \geq -T$ and $c \in [0, \ov{c}]$ we have
\begin{equation}
\label{qrc_decay}
 (QR^c)(x,t) \leq \frac1{10} \sup_{M  } (QR^c) (\cdot, t^*) .
\end{equation}
\end{Claim2}

\begin{proof}
The constant $\Theta < \infty$ will determined in the end of the proof, depending only on $E$.
The constant $\ov{c}$ will be determined in the course of the proof, depending only on $E$ and $\Theta$.
Assume that the statement was wrong for fixed $\Theta$, choose $A_i \to \infty$ and consider solutions $(h_{i,t})_{t \in [-T_i,0]}$ to the linearized Ricci-DeTurck flow, as well as points $(x_i,t_i) \in M \times [-T_i,0]$ where (\ref{qrc_decay}) is violated.
By linearity we may assume without loss of generality that
\begin{equation}
\label{eqn_al_h_i_normalization}
 \alpha[h_i](x_i,t_i) = 1 
\end{equation}
Set $K_i := R (x_i, t_i)$ and $t^*_i :=  t_i - \Theta K_i^{-1} \geq -T_i$.
In Case (A) set
\[ \td{h}_{i,t} := h_{i,t} - \nabla^2 w_{i,t}, \]
where $(w_{i,t})_{t \in [t^*_i,0]}$ is a solution to the heat equation with initial condition $w_{i, t^*_i} = ( \mathcal{F}_j h_{i, t^*_i}) u_j$, as explained in Lemma~\ref{Lem_alpha_plus_Hessian}.
In Case (B) set $\td{h}_{i,t} := h_{i,t}$.
Then 
$\al[\td h_i](x_i,t_i)= \al[h_i](x_i,t_i)=1$.
So, since (\ref{qrc_decay}) is violated at $(x_i, t_i)$, we have
\begin{equation} \label{eq_RQtstar_RQxiti}
 R^c(\cdot, t^*_i) \, \frac{|\td{h}_{i,t^*_i}|}{((|t^*_i| +A_i) R(\cdot, t^*_i))^E + 1} \leq K_i^c \, \frac{10}{((|t_i|+A_i) K_i )^{E} + 1} . 
\end{equation}

Choose $H = H(E), L=L(E) < \infty$ according to Proposition~\ref{Prop_weighted_semilocal_max} and set
\begin{multline*}
 f_i := R^c \cdot e^{- H (|t^*_i| + A_i)^{-1} (t - t^*_i)}  \frac{|\td{h}_i|}{((|t^*_i| +A_i) R)^E + 1}\\
 =R^c (|t^*_i| +A_i)^{-E}\cdot e^{- H (|t^*_i| + A_i)^{-1} (t - t^*_i)}  \frac{|\td{h}_i|}{ R^E + (|t^*_i| +A_i)^{-E}}.
\end{multline*}
Assume in the following that $c \leq \ol{c}(\Theta, L(E))$ is small enough such that by bounded curvature at bounded distance we have for every $(y,s) \in M \times (-\infty, 0]$,
\begin{equation}
\label{eqn_r_c_bound}
R^c \leq 10 R^c(y,s)\qquad \text{on} \qquad P(y,s, L R^{-1/2} (y,s)).
\end{equation}
Let $(y_i, s_i) \in M \times [t^*_i, t_i]$ be a point such that $S_i := \sup_{M \times [t^*_i, t_i]} f_i \leq 2 f_i (y_i,s_i)$.
Then by Proposition~\ref{Prop_weighted_semilocal_max} (for $a = (|t^*_i| + A_i)^{-1}$) and (\ref{eqn_r_c_bound})
\[ S_i \leq 2 f_i (y_i,s_i) \leq 2 \cdot 10 \big( \tfrac1{100} S_i + C \sup_M f_i (\cdot, t^*_i) \big), \]
for some $C = C(E) < \infty$.
After combining this with (\ref{eq_RQtstar_RQxiti}) and replacing $C$ by $1000 e^H C$, we obtain  that on $M \times [t^*_i, t_i]$ 
\[
R^c \frac{|\td{h}_i|}{((|t^*_i|+A_i) R)^E + 1} 
 \leq  K_i^c \, \frac{ C }{((|t_i|+A_i) K_i)^{E} + 1}.
 \]
 So on $M \times [t^*_i, t_i]$ we have
\begin{equation}
\begin{aligned}
\label{eqn_final_tilde_h}
 |\td{h}_i| 
 &\leq C   \bigg( \frac{K_i}{R} \bigg)^c  \frac{((|t_i| + \Theta K_i^{-1} +A_i) R)^E + 1}{((|t_i|+A_i) K_i)^{E} + 1}  \\
 &=  C   \bigg( \frac{K_i}{R} \bigg)^c\frac{((|t_i|  +A_i) R + \Theta K_i^{-1} R)^E + 1}{((|t_i|+A_i) K_i)^{E} + 1} \\
 &=  C   \bigg( \frac{K_i}{R} \bigg)^c\frac{((|t_i|  +A_i) K_i \cdot K_i^{-1} R + \Theta K_i^{-1} R)^E + 1}{((|t_i|+A_i) K_i)^{E} + 1}.
\end{aligned}
\end{equation}

After passing to a subsequence, we may assume that the limit $Z := \lim_{i \to \infty} (|t_i| + A_i) K_i \in [0, \infty]$ exists or is infinite.
Let us now consider the parabolically rescaled pointed flows $(M, (g_{i,t} := K_i g_{t_i + K_i^{-1} t})_{t \leq 0}, x_i)$.
By the compactness theory of $\kappa$-solutions and after passing to a subsequence, we may assume that these pointed flows smoothly converge to a pointed $\kappa$-solution $(M_\infty, \lb (g_{\infty, t})_{t \leq 0}, \lb x_\infty)$ with $R(x_\infty , 0) = 1$.
By (\ref{eqn_final_tilde_h}), the correspondingly rescaled flows $(\td{h}'_{i,t} := K_i \td{h}_{i,t_i + K_i^{-1} t})_{t \in [-\Theta, 0]})$ satisfy a bound of the form
\begin{equation} \label{eq_tdhprime_rescaled}
 \big| \td{h}'_i \big|_{g_i} \leq C   R^{-c} \frac{((|t_i|  +A_i) K_i \cdot  R + \Theta  R)^E + 1}{((|t_i|+A_i) K_i)^{E} + 1}. 
\end{equation}
Here the scalar curvature is taken with respect to the rescaled metrics.
Since the right-hand side converges to a finite limit, the sequence $\td{h}'_i$ is locally uniformly bounded, so after passing to a subsequence, we may assume that it smoothly converges to a linearized Ricci-DeTurck flow $(\td{h}_{\infty, t})_{t \in (-\Theta, 0]}$ on $M_\infty \times (- \Theta, 0]$ with
\begin{equation}
\label{eqn_al_tilde_h_infty}
\al[\td h_\infty](x_\infty,0)=1.
\end{equation}

\medskip
\textit{Case 1: $Z = \lim_{i \to \infty} (|t_i| + A_i) K_i = \infty$. \quad}
Then passing (\ref{eq_tdhprime_rescaled}) to the limit, we get that
\[ |\td{h}_\infty| \leq CR^{-c} R^E \]
on $M_\infty\times (-\Theta,0]$.
Assume that $2c <  E-1$.
For every $\de'>0$, if $\Theta \geq \underline{\Theta}(\de',E)$, we may apply Corollary~\ref{cor_interior_decay} with $E$ replaced by $E-c$ to obtain that
\[
|\td h_\infty| \leq \delta' C  R^{-c} R^E \qquad \text{on} \qquad P(x_\infty, 0, 1), \qquad m = 0, \ldots, (\delta')^{-1}.
\]
When $\de'$ smaller than some constant depending only on $C= C(E)$ and $E$, we may deduce bounds on $|\nabla^m \td{h}^\infty| (x_\infty, 0)$ that contradict (\ref{eqn_al_tilde_h_infty}).

\medskip
\textit{Case 2: $Z = \lim_{i \to \infty} (|t_i|+A_i) K_i < \infty$. \quad}  
We claim that in this case $(M_\infty, \lb (g_{\infty,t})_{t \leq 0}, \lb x_\infty)$ must be isometric to the standard round shrinking cylinder with $R (\cdot, t) = (1+2|t|)^{-1}$.
Assume not.
Then $\sup_M R(\cdot, t_i) / K_i$ would be uniformly bounded and therefore $(|t_i|+A_i) \sup_M R(\cdot, t_i)$ would be uniformly bounded as well.
By (\ref{eq_extra_assumption}) this would imply that $|t_i|$ remains bounded.
However, since $A_i \to \infty$, we also must have $\sup_M R(\cdot, t_i) \to 0$, contradicting the fact that $|t_i|$ remains bounded.

Passing (\ref{eq_tdhprime_rescaled}) to the limit, we get that
\[
 |\td{h}_\infty| \leq   C  R^{-c} \frac{((Z+\Theta)R)^E +1}{Z^E+1}
 = C  (1+2|t|)^{c} \frac{((Z+\Theta)(1+2|t|)^{-1})^E +1}{Z^E+1}\,.
\]
Assume in the following that $c \leq \ov{c} (\Theta)$ such that $(1+2\Theta)^c\leq 2$.
Then
\begin{multline*}
 \limsup_{t\searrow -\Theta}|\td{h}_{\infty,t}| \leq C   (1+2\Theta)^{c} \frac{((Z+\Theta)(1+2\Theta)^{-1})^E +1}{Z^E+1} \\
 \leq 2C   \frac{(Z+1)^E +1}{Z^E+1}
 \leq  2C \frac{2^E Z^E + 2^E+1}{Z^E+1} \leq 2C (2^E+1).
\end{multline*}
By Lemma~\ref{lem_l_large_alpha_small}, if $\Theta$ is larger than some constant depending on $C = C(E)$, then $\al[\td h_\infty](x_\infty,0)<\frac12$, contradicting (\ref{eqn_al_tilde_h_infty}).
\end{proof}

By combining Claim~1 with Claim~2 for $c = 0$ and observing that $Q \leq \alpha[h]$, we obtain that $Q \leq  C^*$ on $M \times [-T,0]$.
Therefore, we have
\[ \alpha[h](\cdot, 0) \leq C^* \big( (AR(\cdot,0))^E + 1 \big), \]
which implies the first bound of the lemma for some $C' = C'(E)$, since $R$ is uniformly bounded.
In order to prove the second bound, it suffices to show that $Q$ can be made arbitrarily small on $\{ \delta \leq R(\cdot, 0) \leq \delta^{-1} \}$ if $T$ is chosen sufficiently large.
To see this choose $x_0 \in \{ \delta \leq R(\cdot, 0) \leq \delta^{-1} \}$ and assume that $Q(x_0, 0) \geq \delta' > 0$.
We can inductively construct a sequence of points $(x_0, t_0) = (x_0, 0), (x_1, t_1), \ldots$ by the following algorithm:
If $t_{i+1} := t_i - \Theta R^{-1} (x_i, t_i) < T$, then stop the algorithm at $(x_i, t_i)$.
Otherwise, use Claim~2 with $c = \ov{c}$ to find a point $x_{i+1} \in M$ with
\[ ( R^c Q) (x_{i+1}, t_{i+1}) \geq 10 (R^c Q)(x_{i}, t_i). \]
So if $(x_i, t_i)$ is defined, then $(R^c Q)(x_i, t_i) \geq \delta'$ and
\[ (R^c Q)(x_0, 0) \leq 10^{-i} (R^c Q)(x_i, t_i) \leq 10^{-i} \max_{M \times (-\infty,0]} R^c C^*. \]
It remains to show that the sequence $(x_i, t_i)$ exist for large enough $i$ if $T$ is chosen sufficiently large.
To see this, note that whenever $(x_i, t_i)$ is defined, we have
\[ t_i - t_{i+1} =  \Theta R^{-1}(x_{i}, t_{i})    \leq \Theta \bigg( \frac{Q(x_{i}, t_{i})}{(R^c Q)(x_i,t_i)} \bigg)^{1/c} \leq \Theta \bigg( \frac{C^*}{\delta'} \bigg)^{1/c}. \]
So for fixed $i$ and sufficiently large $T$, we have $t_{i+1} \geq -T$ and the algorithm can be continued.
\end{proof}

\section{The main argument} \label{sec_main_argument}
\subsection{Choice of constants and terminology}
In the following, we will define the \emph{scale of a point $x$} by $\rho(x) := |{\Rm}|^{-1/2} (x) \in (0, \infty]$.

We will first fix some constants, which we will use throughout this section.
Let $\underline{E} < \infty$ be the constant from \cite[Theorem~1.7]{BamlerKleiner2017} (Strong Stability of Ricci flow Spacetimes) and fix $E > \underline{E}$.
Based on this choice, let $\eps_{\can} := \eps_{\can} (E) > 0$, again according to \cite[Theorem~1.7]{BamlerKleiner2017}.

We will now fix a constant $m_{\can} \in \IN$ according to the following (trivial) Lemma.

\begin{Lemma} \label{Lem_mcan_def}
There are constants $D_{\can} < \infty$, $m_{\can} \in \IN$ and $\eps'_{\can} > 0$ such that the following holds.
Let $(M,g, x)$ be a pointed Riemannian manifold and $(\ov{M}, \ov{g}, \ov{x})$ the pointed final time-slice of a $\kappa$-solution.
Assume that there is a diffeomorphism onto its image
$$ \psi : B^{\ov{M}} (\ov{x}, D_{\can}) \to M $$
such that $\psi (\ov{x}) = x$  and such that for $\lambda := \rho (x)$
$$ \big\Vert \lambda^{-2} \psi^* g - \ov{g} \big\Vert_{C^{m_{\can}} (B^{\ov{M}} (\ov{x}, D_{\can}))} < \eps'_{\can}. $$
Then $(M,g)$ satisfies the $\eps_{\can}$-canonical neighborhood assumption at $x$.
\end{Lemma}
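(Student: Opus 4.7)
The plan is to proceed by contradiction and compactness; the lemma is called ``trivial'' because it essentially amounts to checking that closeness in a sufficiently strong $C^{m_{\can}}$-sense on a sufficiently large ball already implies the (weaker) closeness hidden in the definition of the $\eps_{\can}$-canonical neighborhood assumption. If the statement fails for every choice of constants, one obtains sequences $D_i\to\infty$, $m_i\to\infty$, $\eps'_i\to 0$, $\kappa$-solutions $(\M_i,(g_{i,t})_{t\leq 0})$ with final time-slices $(\ov M_i,\ov g_i,\ov x_i)$, pointed Riemannian manifolds $(M_i,g_i,x_i)$, and diffeomorphisms $\psi_i:B^{\ov M_i}(\ov x_i,D_i)\to M_i$ with $\psi_i(\ov x_i)=x_i$ and $\|\lambda_i^{-2}\psi_i^*g_i-\ov g_i\|_{C^{m_i}}<\eps'_i$, where $\lambda_i:=\rho(x_i)$, yet such that $(M_i,g_i)$ fails the $\eps_{\can}$-canonical neighborhood assumption at $x_i$.

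First I would rescale each $g_i$ by $\lambda_i^{-2}$ and work with $\hat g_i:=\lambda_i^{-2}g_i$. The canonical-neighborhood condition is scale invariant, so $(M_i,\hat g_i)$ still fails it at $x_i$; but now $\rho_{\hat g_i}(x_i)=1$ and $\|\psi_i^*\hat g_i-\ov g_i\|_{C^{m_i}}<\eps'_i$. For $m_i\geq 2$ the $C^2$-closeness of the metrics forces the scale functions to be pointwise close, so in particular $\rho_{\ov g_i}(\ov x_i)\to 1$. I would then invoke Perelman's compactness theorem for $\kappa$-solutions: after passing to a subsequence, $(\M_i,(g_{i,t})_{t\leq 0},\ov x_i)$ converges smoothly in the pointed Cheeger--Gromov sense to a limit $\kappa$-solution with final time-slice $(\ov M_\infty,\ov g_\infty,\ov x_\infty)$ satisfying $\rho_{\ov g_\infty}(\ov x_\infty)=1$. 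Combining this convergence with $D_i\to\infty$, $m_i\to\infty$, $\eps'_i\to 0$ and the hypothesis, the pointed Riemannian manifolds $(M_i,\hat g_i,x_i)$ converge smoothly to the same limit.

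To finish, I would observe that the final time-slice of any $\kappa$-solution tautologically satisfies the $\eps_{\can}$-canonical neighborhood assumption at each of its points: the identity map serves as the comparison diffeomorphism, giving zero deviation from the model. Since the canonical-neighborhood condition is formulated as a $C^k$-closeness estimate on a metric ball of fixed radius, with both $k$ and the radius depending only on $\eps_{\can}$, it is open under smooth pointed Cheeger--Gromov convergence with scale normalization. Hence $(M_i,\hat g_i,x_i)$ satisfies the condition at $x_i$ for all sufficiently large $i$, contradicting the assumption. The only minor subtlety is this openness step; once it is in hand the rest is bookkeeping, and indeed one could instead prove the lemma directly by choosing $D_{\can}$ larger than the radius appearing in the definition of the canonical neighborhood assumption, $m_{\can}$ larger than the corresponding order of differentiability, and $\eps'_{\can}$ smaller than the admissible deviation.
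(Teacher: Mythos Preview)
The paper gives no proof of this lemma; it is simply labeled ``(trivial)'' and stated without argument. Your proposal is correct, and the direct observation you make at the end --- choose $D_{\can}$, $m_{\can}$, $\eps'_{\can}$ to dominate the corresponding radius, regularity order, and tolerance appearing in the definition of the $\eps_{\can}$-canonical neighborhood assumption, so that the hypothesis of the lemma is literally stronger than its conclusion --- is precisely the triviality the authors have in mind. Your contradiction/compactness argument also works, but it is more machinery than the situation calls for: Perelman's compactness theorem and the openness of the canonical neighborhood condition are not needed when the conclusion is already a weakening of the hypothesis.
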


Let $D$ be a constant, whose value will be determined later in Lemma~\ref{Lem_gluing}.
Using this constant and the constant $m_{\can} \in \IN$, we now define a quantity $\beta$ that measures the degree to which a metric is locally $O(3)$-invariant.

\begin{Definition}[Pointed roundness] \label{Def_pointed_roundness}
Let $(M,g,x)$ be a complete, pointed Riemannian manifold.
We define $\beta (x)$ to be the infimum over all $\beta' > 0$ with the following property:
There is a pointed Riemannian manifold $(\ov{M}, \ov{g}, \ov{x})$ that admits an isometric $O(3)$-action whose generic orbits are 2-spheres, and such that $B^{\ov{M}} (\ov{x}, D)$ is relatively compact, as well as a diffeomorphism onto its image
$$ \psi : B^{\ov{M}} (\ov{x}, D) \longrightarrow M $$
such that $\psi (\ov{x}) = x$ and such that for $\lambda := \rho (x)$
$$ \big\Vert \lambda^{-2} \psi^* g - \ov{g} \big\Vert_{C^{m_{\can} + 100} (B^{\ov{M}} (\ov{x}, D))} < \beta'. $$
If $(M, g)$ is the time-$t$-slice of a Ricci flow, then we will write $\beta(x,t)$ instead of $\beta(x)$. 
\end{Definition}

Note that $\beta$ is an upper semi-continuous function.
A standard limit argument yields

\begin{Lemma} \label{Lem_alpha_0_O3}
If $\beta \equiv 0$ on $M$, then $(M,g)$ admits an isometric $O(3)$-action whose generic orbits are 2-spheres.
\end{Lemma}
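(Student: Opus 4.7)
The plan is to extract a local isometric $O(3)$-action at each point by a pointed Cheeger--Gromov limit, and then to glue the local actions into a global one using the rigidity of such actions on rotationally symmetric Riemannian manifolds.

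Fix $x \in M$ and set $\lambda = \rho(x)$. By $\beta(x) = 0$ applied with $\beta_i \searrow 0$, there exist pointed Riemannian manifolds $(\overline{M}_i, \overline{g}_i, \overline{x}_i)$ carrying isometric $O(3)$-actions $\rho_i : O(3) \to \Isom(\overline{M}_i, \overline{g}_i)$ whose principal orbits are $2$-spheres, together with diffeomorphisms $\psi_i : B^{\overline{M}_i}(\overline{x}_i, D) \to M$ satisfying $\|\lambda^{-2}\psi_i^* g - \overline{g}_i\|_{C^{m_{\can}+100}} \to 0$. The resulting uniform $C^{m_{\can}+100}$ bounds on $\overline{g}_i$ give uniform bounds on curvature and injectivity radius on each ball $B^{\overline{M}_i}(\overline{x}_i, D-\delta)$, so by pointed Cheeger--Gromov compactness we may pass to a subsequence converging in $C^{m_{\can}+99}$ to a limit $(\overline{M}_\infty, \overline{g}_\infty, \overline{x}_\infty)$ carrying an isometric $O(3)$-action $\rho_\infty$ with principal orbits $2$-spheres. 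After precomposing $\psi_i$ with the Cheeger--Gromov diffeomorphisms, the $\psi_i$ subconverge to an isometric embedding $\psi_\infty : (B^{\overline{M}_\infty}(\overline{x}_\infty, D-\delta), \overline{g}_\infty) \hookrightarrow (M, \lambda^{-2} g)$. Transporting $\rho_\infty$ by $\psi_\infty$ produces an isometric $O(3)$-action on the open neighborhood $U_x := \psi_\infty(B^{\overline{M}_\infty}(\overline{x}_\infty, D-\delta))$ of $x$ in $(M,g)$ whose principal orbits are $2$-spheres.

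Next I would glue these local actions. On the overlap $U_x \cap U_y$ of two such neighborhoods, the set of principal orbits is intrinsic to the local geometry (they are precisely the connected components of level sets of an appropriate curvature invariant, or equivalently the common orbits of the two local actions), so the two local $O(3)$-actions preserve the same $2$-sphere foliation. Restricted to a single principal orbit $\Sigma$ --- a round $2$-sphere --- each action gives an isomorphism $O(3) \to \Isom(\Sigma) \cong O(3)$, and hence the two actions differ on $\Sigma$ by conjugation by a fixed element $A \in O(3)$. Since the assignment $\Sigma \mapsto A$ is continuous on the overlap region and $O(3)$ is discrete modulo its identity component while the spheres vary continuously, $A$ is locally constant, hence constant on each connected component of the overlap. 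Therefore after post-composing the action on $U_y$ by a fixed element of $O(3)$ the two local actions agree on the overlap.

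Fixing a basepoint $x_0 \in M$ with reference local action and propagating this compatibility along chains of overlapping neighborhoods, I obtain a globally consistent choice of local $O(3)$-actions (the cocycle condition is satisfied because the adjusting elements multiply correctly around loops, by connectedness of $M$ and the rigidity just used). The resulting maps $\phi_A : M \to M$ are defined piecewise as local isometries agreeing on overlaps; they patch into globally defined isometries, and the group law $\phi_A \circ \phi_B = \phi_{AB}$ holds because it holds locally. This yields the desired isometric $O(3)$-action on $(M,g)$ with $2$-sphere principal orbits. I expect the main obstacle to be the verification that the local actions can be made globally consistent --- that is, that the pseudogroup generated by the local actions collapses to a single group action --- which hinges on the cocycle vanishing argument sketched above and the rigidity of isometric $O(3)$-actions with $2$-sphere principal orbits.
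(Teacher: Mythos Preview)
The paper gives no proof of this lemma at all; it simply precedes the statement with ``A standard limit argument yields'' and moves on. Your plan is precisely the kind of argument the authors have in mind, and the first half---extracting a local isometric $O(3)$-action near each point by taking a Cheeger--Gromov limit of the comparison models $(\overline{M}_i,\overline{g}_i,\overline{x}_i)$---is correct and is the heart of the matter.

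There is, however, a genuine gap in your gluing step. You claim the conjugating element $A=A(\Sigma)$ is locally constant because ``$O(3)$ is discrete modulo its identity component.'' But $A$ is determined only as an automorphism of $O(3)$, all of which are inner by elements of $SO(3)$ (unique modulo the trivial center), so $A$ varies continuously in the \emph{connected} group $SO(3)$; continuity alone does not make it constant. The correct argument is rigidity of isometries: after conjugating so the two local actions agree on a single principal orbit $\Sigma_0$, note that for each $A\in O(3)$ the isometries $\phi_A$ and $\phi'_A$ agree on $\Sigma_0$ and, since both preserve the common $2$-sphere foliation, their differentials also agree on the unit normal along $\Sigma_0$ (first for $A\in SO(3)$ by continuity from the identity, then for the other component). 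An isometry of a connected Riemannian manifold is determined by its $1$-jet at a point, so $\phi_A=\phi'_A$ on the connected component of the overlap. You should also note that the ball $B^{\overline{M}_\infty}(\overline{x}_\infty,D-\delta)$ need not be $O(3)$-invariant unless $\overline{x}_\infty$ lies on the axis; replace $U_x$ by a union of principal orbits (a saturated neighborhood) before transporting the action.
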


We will moreover use the following asymptotic roundness property of $\kappa$-solutions.

\begin{Lemma} \label{Lem_sup_alpha_to_0}
Let $(M, (g_t)_{t \leq 0})$ be a $\kappa$-solution on $\IR^3$ or $S^3$.
In the case $M \approx S^3$, we additionally assume that Theorem~\ref{Thm_O3_invariance_general} already holds for any $\kappa$-solution on $\IR^3$.

Then there is a sequence $t_i \searrow - \infty$ such that $\sup_M \beta (\cdot, t_i) \to 0$.
\end{Lemma}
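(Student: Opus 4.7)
The plan is to argue by contradiction. Suppose that for some $\beta_0 > 0$ there is a sequence $(x_i, t_i) \in M \times (-\infty, 0]$ with $t_i \to -\infty$ and $\beta(x_i, t_i) \geq \beta_0$ for all $i$. Setting $\lambda_i := \rho(x_i, t_i)$, we parabolically rescale and time-shift to obtain the pointed $\kappa$-solutions $(M, \hat g_{i,t} := \lambda_i^{-2} g_{t_i + \lambda_i^2 t}, x_i)$. By Perelman's compactness theorem for $3$-dimensional $\kappa$-solutions, a subsequence converges smoothly in the pointed Cheeger--Gromov sense to a pointed $\kappa$-solution $(M_\infty, (g_{\infty, t})_{t \leq 0}, x_\infty)$ with $\rho(x_\infty, 0) = 1$. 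Since $\beta$ is upper semi-continuous (as noted after Definition~\ref{Def_pointed_roundness}, by virtue of the $C^{m_{\can}+100}$-closeness condition), we have $\beta(x_\infty, 0) \geq \beta_0$. The remaining task is to show that $(M_\infty, g_{\infty, 0})$ admits an isometric $O(3)$-action whose generic orbits are 2-spheres; Lemma~\ref{Lem_alpha_0_O3} then forces $\beta \equiv 0$ on $M_\infty$, contradicting $\beta(x_\infty, 0) \geq \beta_0$.

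Next, we classify $M_\infty$ by topological type using Perelman's structure theory for 3D $\kappa$-solutions. If $M_\infty \approx S^2 \times \IR$, then $M_\infty$ is homothetic to the round shrinking cylinder (the unique 3D $\kappa$-solution on $S^2 \times \IR$), which is $O(3)$-invariant. If $M_\infty$ is compact, this can only happen when $M \approx S^3$ (noncompact manifolds cannot have compact pointed Cheeger--Gromov limits); then Perelman's asymptotic soliton theorem applied to $M$, combined with Hamilton's classification of compact 3D gradient shrinking Ricci solitons, forces $(M, |t|^{-1} g_t)$ to converge as $t \to -\infty$ to a quotient of the round shrinking sphere. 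A scale comparison then traps $\lambda_i/|t_i|^{1/2}$ in a compact subinterval of $(0,\infty)$, and so the compact limit $M_\infty$ is itself a quotient of the round sphere, hence $O(3)$-invariant. This reduces the problem to the case $M_\infty \approx \IR^3$.

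To handle $M_\infty \approx \IR^3$, we split according to the topology of $M$. If $M \approx S^3$, the lemma's hypothesis that Theorem~\ref{Thm_O3_invariance_general} is already known for $\IR^3$-type $\kappa$-solutions immediately yields that $M_\infty$ is rotationally symmetric. If $M \approx \IR^3$, we invoke condition (\ref{eq_extra_assumption}): if $\lambda_i/|t_i|^{1/2}$ were bounded below along a subsequence, the rescaled flows would subconverge to the cylindrical asymptotic shrinking soliton on $S^2 \times \IR$, contradicting $M_\infty \approx \IR^3$. Hence $\lambda_i/|t_i|^{1/2} \to 0$, placing the rescaling at tip scale, so the rescaled time intervals $[-|t_i|\lambda_i^{-2}, 0]$ exhaust $(-\infty, 0]$ in the limit. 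An application of Perelman's reduced-volume/Harnack machinery at this scale produces an ancient gradient steady Ricci soliton on $\IR^3$ in the limit, and Brendle's earlier soliton uniqueness theorem \cite{Brendle2013} identifies $M_\infty$ with the Bryant soliton, which is rotationally symmetric.

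The main obstacle is the soliton identification of $M_\infty$ at the tip scale in the $M \approx \IR^3$, $M_\infty \approx \IR^3$ subcase: one must combine (\ref{eq_extra_assumption}) with an adaptation of Perelman's reduced-volume/Harnack argument (the tip-scale analogue of his asymptotic shrinking soliton theorem) in order to upgrade $(M_\infty, g_{\infty, \cdot})$ to a steady gradient Ricci soliton, after which Brendle's 2013 rigidity gives the Bryant identification and would complete the contradiction. The other subcases follow relatively directly from the pointed Cheeger--Gromov compactness of $\kappa$-solutions, Perelman's structure theory, and Hamilton's classification of compact 3D gradient shrinking solitons.
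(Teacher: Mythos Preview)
Your overall strategy---take a pointed blowdown limit $M_\infty$ and argue it must be rotationally symmetric---matches the paper's, and your $S^3$ case is essentially the paper's argument: once Theorem~\ref{Thm_O3_invariance_general} is assumed on $\IR^3$, every noncompact blowdown of a non-round $S^3$ $\kappa$-solution is a cylinder or an $\IR^3$-type $\kappa$-solution, both rotationally symmetric. (Your handling of a hypothetical compact $M_\infty$ is not quite right---Perelman's asymptotic soliton of a non-round $S^3$ $\kappa$-solution is the \emph{cylinder}, not the sphere---but this case is in fact vacuous unless $M$ is already round.)

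The genuine gap is the $M\approx\IR^3$ case, and it stems from your contradiction setup. You are negating $\lim_{t\to-\infty}\sup_M\beta(\cdot,t)=0$, which is \emph{stronger} than the lemma's claim $\liminf_{t\to-\infty}\sup_M\beta(\cdot,t)=0$. This forces you to handle an \emph{arbitrary} sequence $(x_i,t_i)$, and for such a sequence there is simply no mechanism that makes the limit $M_\infty$ a steady soliton: the ``reduced-volume/Harnack machinery'' produces \emph{shrinkers} at the $|t|^{1/2}$ scale, not steadies at an arbitrary tip scale, and your inference ``$\lambda_i/|t_i|^{1/2}\to 0$ places the rescaling at tip scale'' does not follow (it only says $|t_i|R(x_i,t_i)\to\infty$, not $R(x_i,t_i)\sim R_{\max}(t_i)$). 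You also cannot invoke (\ref{eq_extra_assumption}) here: that statement is established only for \emph{rotationally symmetric} $\kappa$-solutions via Theorem~\ref{Thm_Bryant_uniqueness}, which is exactly what is not yet known for $(M,g_t)$.

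The paper (following \cite{Brendle2018}) instead \emph{chooses} the sequence. Let $p_t$ realize $R_{\max}(t)$. Hamilton's trace Harnack on an ancient solution gives $\partial_t R\ge 0$, so $R_{\max}$ is nondecreasing and bounded above; one can therefore select $t_i\to-\infty$ along which the scale-invariant Harnack quantity at $p_{t_i}$ tends to $0$. Rescaling at $(p_{t_i},t_i)$ and passing to a limit, the Harnack inequality is saturated at the basepoint, and Hamilton's rigidity case \cite{Hamilton1993} forces the limit to be a gradient steady soliton; Brendle's theorem \cite{Brendle2013} then identifies it as the Bryant soliton. Since the rest of $(M,g_{t_i})$ is asymptotically cylindrical, $\sup_M\beta(\cdot,t_i)\to 0$ along this particular sequence. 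The freedom to select the $t_i$ is exactly what your contradiction setup gives away, and it is the missing idea.
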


This lemma is the same as \cite[Lemma~2.7]{Brendle2018}.

\begin{proof}
The case $M \approx \IR^3$ is a consequence of the rigidity discussion of Hamilton's Harnack inequality \cite{Hamilton1993} and Brendle's uniqueness result of the Bryant soliton among $\kappa$-solutions that are solitons \cite{Brendle2013}.
The proof is the same as in \cite{Brendle2018}, so we omit it here.

The case $M \approx S^3$ follows from the fact that the flow is either homothetic to the shrinking round sphere or any rescaling limit for $t \searrow - \infty$ is a shrinking round cylinder or is diffeomorphic to $\IR^3$ and therefore rotationally symmetric by assumption.
\end{proof}

Lastly, we will also use

\begin{Lemma} \label{Lem_gluing}
There are universal constants $D, C_0 < \infty$ with the following property.
Let $(M,g)$ be complete 
and $\sup_M\al  \leq \beta^* \leq C_0^{-1}$.
Then there is a complete $O(3)$-invariant metric $g'$ on $M$ whose generic orbits are 2-spheres such that for all $m = 0, \ldots, m_{\can} + 90$
\[ \big| \rho^m \nabla^m (g' - g) \big| \leq C_0 \beta^*. \]
\end{Lemma}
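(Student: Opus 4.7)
The plan is to patch the local approximate $O(3)$-symmetries supplied by $\beta(x) \leq \beta^*$ into a genuine global $O(3)$-action on $M$, and then obtain $g'$ by averaging $g$ over this action. For each $x \in M$, the hypothesis $\beta(x) \leq \beta^*$ provides a model $(\ov{M}_x, \ov{g}_x, \ov{x})$ carrying an isometric $O(3)$-action and an embedding $\psi_x$ of the $D$-ball around $\ov{x}$, with $\|\lambda_x^{-2}\psi_x^* g - \ov{g}_x\|_{C^{m_{\can}+100}} < \beta^*$, where $\lambda_x = \rho(x)$.

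The first step is a rigidity observation: on overlaps, the composition $\tau_{xy} := \psi_y^{-1}\circ\psi_x$ is $O(\beta^*)$-close in $C^{m_{\can}+100}$ to an isometry between two $O(3)$-invariant warped products $dr^2+q(r)^2 g_{S^2}$. In dimension $3$ these isometries form only a $4$-parameter family (translations of $r$ and rotations of the $S^2$-factor), so after fixing a framing, $\tau_{xy}$ is $O(\beta^*)$-close to an $O(3)$-equivariant map. The second step uses this to build a global action: cover $M$ by a locally finite subfamily $\{\psi_{x_i}(B^{\ov{M}_{x_i}}(\ov{x}_i, D/4))\}$, choose a subordinate partition of unity, and use Step 1 together with an iterative correction of size $O(\beta^*)$ (a contraction-mapping argument on the space of candidate fibrations) to align the local actions into one smooth global $O(3)$-action $\{\Phi_A\}_{A \in O(3)}$ on $M$ whose generic orbits are $2$-spheres. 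Cap-type local models, which occur for the $\IR^3$ and $S^3$ topologies, have one collapsed fiber and are incorporated by an ad hoc argument around the fixed point, exploiting again the rigidity of $O(3)$-invariant cap metrics. The loss of ten derivatives, from $m_{\can}+100$ down to $m_{\can}+90$, is absorbed by the partition of unity and by smoothing in this patching.

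Finally, set $g' := \int_{O(3)}\Phi_A^* g\, dA$ against the normalized bi-invariant Haar measure; then $g'$ is $O(3)$-invariant by construction. For the estimate, pull back by $\psi_x$ and decompose $\psi_x^* g = \lambda_x^2 \ov{g}_x + \eta_x$, with $\|\eta_x\|_{C^{m_{\can}+100}} \leq \beta^*\lambda_x^2$. The global action differs from the local $O(3)$-action on $\ov{M}_x$ by $O(\beta^*)$ (by Steps 1--2), so averaging moves $\lambda_x^2 \ov{g}_x$ by only $O(\beta^*\lambda_x^2)$ in $C^{m_{\can}+90}$, while averaging $\eta_x$ does not increase its norm. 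Together these yield $|\rho^m \nabla^m(g'-g)| \leq C_0 \beta^*$ for $m \leq m_{\can}+90$ after rescaling back by $\lambda_x^{-2} \asymp \rho(x)^{-2}$. The main obstacle is the second step: producing an honest global $O(3)$-action rather than a merely approximate one. This is where the smallness condition $\beta^* \leq C_0^{-1}$ is essential (to make the iterative alignment converge) and where the topological classification of the local models (neck versus cap) must be reconciled with the global structure. Once the action is in place, the averaging and error estimates in the third step are routine.
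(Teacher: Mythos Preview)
The paper's own proof consists of the single phrase ``Standard gluing argument.'' with no further elaboration, so there is nothing substantive to compare your proposal against. Your outline --- local models from the definition of $\beta$, rigidity of transition maps on overlaps, patching into a global $O(3)$-action (or equivalently a global $S^2$-fibration with degenerate fibers at caps), and then averaging --- is a reasonable way to flesh out what the authors evidently regard as routine, and it is consistent with how such arguments are carried out elsewhere in the literature on neck structures in Ricci flow.

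One small remark: an alternative, slightly more direct route that also falls under ``standard gluing'' is to bypass the construction of a global group action and instead glue the local $O(3)$-invariant \emph{metrics} $\lambda_x^2\,\psi_{x*}\ov g_x$ directly, after first aligning the local fibrations into a single smooth $S^2$-foliation of $M$. This avoids having to produce a genuine action of $O(3)$ on $M$ (which is a somewhat stronger conclusion than the lemma actually requires --- the lemma only asks for \emph{some} isometric $O(3)$-action on $(M,g')$, which is automatic for any warped product over an interval with round $S^2$ fibers). Either variant is acceptable here, and your derivative-loss bookkeeping and use of the smallness $\beta^*\le C_0^{-1}$ for the contraction step are appropriate.
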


\begin{proof}
Standard gluing argument.
\end{proof}

\subsection{The main stability estimate}
Our main estimate will be the following proposition.

\begin{Proposition} \label{Prop_alpha_decay}
Given any $\kappa$-solution $(M, (g_t)_{t \leq 0})$ that is not a constant curvature space form, we can find constants $\ov\beta > 0$, $A < \infty$ such that the following holds for any $(x,t) \in M \times (- \infty, -A]$.
If $\beta \leq \ov\beta$ on $M \times [ t - A R^{-1} (x,t), t]$, then
\[ \beta (x,t) \leq \frac1{10} \sup_{M \times [  t - A R^{-1} (x,t), t]} \beta . \]
\end{Proposition}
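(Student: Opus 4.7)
The plan is to argue by contradiction, combining the linear decay result Proposition~\ref{Prop_LRDTF_0T} with the Strong Stability Theorem \cite[Theorem~1.7]{BamlerKleiner2017} and the gluing Lemma~\ref{Lem_gluing}. Suppose no such $\ov\beta, A$ exist; then there are sequences $\ov\beta_i \searrow 0$, $A_i \to \infty$ and points $(x_i,t_i) \in M \times (-\infty, -A_i]$ with $\beta \leq \ov\beta_i$ on the parabolic neighborhood $P_i := M \times [t_i - A_i R^{-1}(x_i,t_i), t_i]$, but $\beta(x_i,t_i) > \tfrac{1}{10} \beta^*_i$, where $\beta^*_i := \sup_{P_i} \beta \to 0$. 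After parabolic rescaling we may assume $R(x_i,t_i) = 1$, $t_i = 0$, so $P_i = M \times [-A_i, 0]$. By the compactness theory for $\kappa$-solutions, a subsequence of $(M, (g_{i,t})_{t \leq 0}, x_i)$ converges smoothly to a pointed $\kappa$-solution $(M_\infty, (g_{\infty,t})_{t \leq 0}, x_\infty)$ with $R(x_\infty,0) = 1$; since $M$ is not a constant curvature space form, $M_\infty$ is diffeomorphic to $\IR^3$, $S^2\times\IR$, $S^3$ or $\IR P^3$.

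Next I would produce a global rotationally symmetric comparison flow. At the initial time $t = -A_i$, the bound $\sup \beta \leq \beta^*_i$ together with Lemma~\ref{Lem_gluing} yields an $O(3)$-invariant metric $g'_{i,-A_i}$ with $|\rho^m \nabla^m (g'_{i,-A_i} - g_{i,-A_i})| \leq C_0 \beta^*_i$ for $m \leq m_{\can}+90$. Evolving by Ricci flow produces an $O(3)$-invariant Ricci flow $(g'_{i,t})_{t \in [-A_i, 0]}$ which, by compactness and the discussion in Section~\ref{sec_linear}, subconverges to a rotationally symmetric $\kappa$-solution $(M_\infty, (g'_{\infty,t}))$ on the same underlying manifold. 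Now the Strong Stability Theorem applies: Lemma~\ref{Lem_mcan_def} together with $\beta \leq \ov\beta_i$ along $P_i$ guarantees the $\eps_{\can}$-canonical neighborhood assumption along $(g_{i,t})$, and the initial closeness $g_{i,-A_i} - g'_{i,-A_i} = O(\beta^*_i)$ propagates forward in an appropriate Ricci-DeTurck gauge to give $h_{i,t}$ satisfying the Ricci-DeTurck equation on $M \times [-A_i,0]$ with a global bound $|h_{i,t}| \leq \textup{const} \cdot \beta^*_i$. Rescaling by $(\beta^*_i)^{-1}$ and passing to a further subsequence, the tensors $(\beta^*_i)^{-1} h_{i,t}$ converge to a uniformly bounded solution $h_\infty$ of the linearized Ricci-DeTurck equation on the rotationally symmetric background $(M_\infty, (g'_{\infty,t}))$, with a definite lower bound on $\beta_{h_\infty}(x_\infty,0)$ coming from the normalization $\beta(x_i,0) > \beta^*_i/10$.

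Finally, apply Proposition~\ref{Prop_LRDTF_0T} to $h_\infty$, where the initial time $-T$ can be taken arbitrarily far in the past because $A_i \to \infty$. This yields a decomposition on $B(x_\infty, 0, D)$
\[ h_\infty|_{t=0} = h^{\rot}_0 + \nabla^2\bigl(f_1(r)u_1 + f_2(r) u_2 + f_3(r) u_3\bigr) + h'_0, \]
with $\Vert h'_0 \Vert_{C^{m_{\can}+100}}$ arbitrarily small (by choosing $A_i$ large). The rotationally invariant piece $h^{\rot}_0$ can be absorbed into the comparison metric $g'_\infty$ without destroying its $O(3)$-symmetry, so it contributes nothing to $\beta(x_\infty,0)$. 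The Hessian piece equals $\tfrac{1}{2}\mathcal{L}_{\nabla w} g'_\infty$ with $w = \sum_j f_j u_j$, so it is an infinitesimal diffeomorphism which is absorbed by the freedom in the choice of embedding $\psi$ and basepoint $\ov{x}$ in Definition~\ref{Def_pointed_roundness}. Only $h'_0$ remains to contribute to $\beta$, but its norm is arbitrarily small, so $\beta(x_\infty,0) < 1/10$ in the limit, contradicting the lower bound from normalization.

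The main obstacle is the careful bookkeeping of the Ricci-DeTurck gauge: one must verify that invoking the Strong Stability Theorem produces a gauge $h_i = g_i - g'_i$ that genuinely satisfies the nonlinear Ricci-DeTurck equation with a global bound proportional to $\beta^*_i$, so that after normalization the limit is a bona fide solution of the \emph{linearized} equation on the rotationally symmetric background; this is exactly why $m_{\can}$ and $\eps'_{\can}$ were threaded through Definition~\ref{Def_pointed_roundness} via Lemma~\ref{Lem_mcan_def}, to match the canonical neighborhood hypothesis in \cite[Theorem~1.7]{BamlerKleiner2017}. A second delicate point is the precise identification of the two "bad" modes appearing in the partial vanishing conclusion with the gauge freedom in the definition of $\beta$: rotationally invariant perturbations stay in the $O(3)$-symmetric class and the specific Hessian modes $\nabla^2(f_j u_j)$ correspond precisely to reparameterizations of the model, which is what converts the partial vanishing of Proposition~\ref{Prop_LRDTF_0T} into a genuine decay of $\beta$.
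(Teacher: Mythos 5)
Your overall strategy is the same as the paper's (contradiction, parabolic rescaling, gluing in an $O(3)$-invariant comparison metric via Lemma~\ref{Lem_gluing}, Strong Stability to get a Ricci--DeTurck perturbation of size $O(\beta^*_i)$, dividing by $\beta^*_i$ and applying Proposition~\ref{Prop_LRDTF_0T}, then absorbing the rotational and Hessian modes into the gauge freedom of Definition~\ref{Def_pointed_roundness} --- this last step matches the paper's endgame with $\chi_i(z)=\exp_z(\beta^*_i\tfrac12\nabla f)$). However, there are two genuine gaps. First, you treat the comparison flow as a smooth Ricci flow: you assert that the glued metric ``evolves by Ricci flow'' on the whole interval and subconverges to a rotationally symmetric $\kappa$-solution. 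There is no a priori reason the perturbed initial data produces a nonsingular flow up to time $0$, and the Strong Stability Theorem \cite[Theorem~1.7]{BamlerKleiner2017} requires the $\eps_{\can}$-canonical neighborhood assumption \emph{for the comparison spacetime}, which is not automatic. The paper takes $\M'_i$ to be a singular Ricci flow spacetime (existence via \cite{Kleiner:2014le}), works up to the maximal time $t^*_i$ at which the canonical neighborhood assumption holds at scale $1$, and then must rule out $t^*_i<0$; that analysis (rescaling at a point $y_i$ violating the assumption, re-applying Strong Stability together with Lemma~\ref{Lem_mcan_def} when the rescaled time interval stays bounded, and a Perelman-style compactness argument when it does not) is roughly half of the proof and is entirely missing from your proposal. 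Your use of Lemma~\ref{Lem_mcan_def} to verify the canonical neighborhood assumption ``along $(g_{i,t})$'' is misplaced: the $\kappa$-solution side satisfies it trivially; it is the perturbed flow whose canonical neighborhoods are the issue.

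Second, you glue at time $-A_i$ with $A_i\to\infty$ and claim a global bound $|h_{i,t}|\le \const\cdot\beta^*_i$ on $[-A_i,0]$. The constants in the Strong Stability Theorem depend on the length of the time interval (the paper must choose $\eps=\eps(\delta,T,E)$), so an initial closeness of size $\beta^*_i$ does not propagate with a uniform constant over intervals of unbounded length; your phrase ``the initial time $-T$ can be taken arbitrarily far in the past because $A_i\to\infty$'' conflates the two time scales. The paper avoids this by first fixing $T=T(\eta,D,m_{\can},C_0)$ from Proposition~\ref{Prop_LRDTF_0T} and gluing at the fixed time $-T$; the role of $A_i\to\infty$ is only to force the limit to be rotationally symmetric and, crucially, non-compact. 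Relatedly, you allow $M_\infty$ to be $S^3$ or $\IR P^3$, but Proposition~\ref{Prop_LRDTF_0T} is only proved for $\IR^3$ and $S^2\times\IR$; you need the paper's observation that since $t_i\le -A_i\searrow-\infty$ and $(M,(g_t))$ is not a constant curvature space form, the rescaled limit is non-compact.
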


\begin{proof}
Fix $(M, (g_t)_{t \leq 0})$ and choose $\ov\beta_i \to 0$, $A_i \to \infty$.
Assume that the statement of the proposition was wrong and choose a sequence of counterexamples $(x_i, t_i) \in M \times (-\infty, -A_i]$ such that $\beta \leq \min \{ \ov\beta_i, 10 \beta (x_i,t_i) \}$ on $M \times [t- A_i R^{-1} (x_i, t_i), t_i]$.
Let $g_{i,t} := R (x_i, t_i) g_{t_i + R^{-1} (x_i, t_i) t}$ be the parabolically rescaled flow on which $R(x_i, 0) = 1$.
We will only work with the pointed sequence of $\kappa$-solutions $(M, (g_{i,t})_{t \leq 0},x_i)$ from now on.
After passing to a subsequence, we may assume that the pointed flows $(M, (g_{i,t})_{t \leq 0}, x_i)$ smoothly converge to some limiting pointed $\kappa$-solution $(M_\infty, (g_{\infty, t})_{t \leq 0}, x_\infty)$.
This limit is non-compact since $t_i \leq -A_i \searrow -\infty$ and $(M, (g_t)_{t \leq 0})$ was assumed not to have constant sectional curvature.
Moreover, $(M_\infty, (g_{\infty, t})_{t \leq 0}, x_\infty)$ is $O(3)$-invariant, because $\ov\beta_i \to 0$.

For each $i$ let
\[ \beta^*_i := \sup_{M \times [ - A_i, 0]} \beta_{g_i} \leq \ov\beta_i \longrightarrow 0. \]
Let $T  > 0$ be a constant whose value we will determine later.
By Lemma~\ref{Lem_gluing}, we can find for large $i$ a complete $O(3)$-invariant metric $g'_{i,-T}$ on $M$ such that for $m = 0, \ldots, m_{\can} + 90$
\begin{equation} \label{eq_g_prime_m_g}
 \big| \rho^{m} (\cdot, -T) \nabla^m (g'_{i,-T} - g_{i, -T}) \big| \leq C_0 \beta^*_i 
\end{equation}
Recall that $C_0$ is a universal constant.
Let $\M'_i$ be the Ricci flow spacetime with initial condition $(M, g'_{i,-T})$ on the time-interval $[-T,0]$.
More specifically, we require $\M'_i$ to be $0$-complete and satisfy the $\eps'$-canonical neighborhood assumption below some small enough scale for any $\eps' > 0$.
The existence of $\M'_i$ is guaranteed by \cite{Kleiner:2014le}.

We will now compare $(M, (g_{i,t})_{t \in [-T,0]})$ with $\M'_i$ and express $\M'_i$ as a Ricci-DeTurck flow on the background $(g_{i,t})_{t \in [-T,0]}$, after modification by a family of diffeomorphisms.
Unfortunately, both flows may a priori differ significantly far away from $x_i$, so we will only be able to express $\M'_i$ as a Ricci-DeTurck flow in a large parabolic neighborhood around $x_i$.

In the following, we will apply the Strong Stability Theorem \cite[Theorem~1.7]{BamlerKleiner2017} to compare $(M, (g_{i,t}))$ with the Ricci flow spacetime $\M'_i$.
Note that the former can be viewed as a Ricci flow spacetime, as explained in \cite[sec~5]{BamlerKleiner2017}.
Fix some arbitrary number $\delta > 0$ and choose $\eps = \eps(\delta, T, E)$, where $T$ is the constant from this proof.
Then we can find a sequence of scales $r_i \to 0$ such that 
\[ C_0 \beta^*_i = \eps r_i^{2E}.  \]
Set (with respect to $g_{i,-T}$)
\[ U_i := \big\{ |{\Rm_{g_i}}| (\cdot, -T) <  (\eps r_i)^{-2} \big\} \subset M. \]
Then on $U_i$
\[ |g_{i,-T} - g'_{i,-T} | \leq C_0 \beta^*_i = \eps r_i^{2E}  < \eps r_i^{2E} \big( |{\Rm_{g_i}}|(\cdot, -T) + 1 \big)^E. \]
For each $i$ choose $t^*_i \in [-T, 0]$ maximal such that $\M'_i$ restricted to the time-interval $[-T, t^*_i)$ satisfies the $\eps_{\can}$-canonical neighborhood assumption at scales $< 1$.
Set $t^*_i := -T$, if no such maximum exists.
Note that, since $(M, (g_{i,t}))$ is a $\kappa$-solution, it satisfies the $\eps_{\can}$-canonical neighborhood assumption at all scales by definition.
So all assumptions of the Strong Stability Theorem hold on the time-interval $[-T,t^*_i)$ for $\phi = \id_U$.
By the Strong Stability Theorem, we then obtain for each $i$ a subset $\widehat{U}_i \subset M \times [-T, t^*_i)$ such that $|{\Rm_{g_i}}| \geq r_i^{-2}$ on $M \times [-T, t^*_i) \setminus \widehat{U}_i$ and a time-preserving diffeomorphism $\widehat{\phi}_i : \widehat{U}_i \to \M'_i$ such that
\[  h_{i,t} := \widehat{\phi}_{i,t}^* g'_{i,t} - g_{i,t} \]
evolves by Ricci-DeTurck flow on $M \times [-T, t^*_i)$ and such that
\begin{equation} \label{eq_h_bound}
 |h_{i,t}| \leq \delta r_i^{2E} ( |{\Rm_{g_i}}| + 1 )^E = \frac{\delta C_0}{\eps} \cdot \beta_i^* ( |{\Rm_{g_i}}| + 1 )^E. 
\end{equation}

\medskip
\textit{Case 1: $t^*_i = 0$ for large $i$. \quad}
After passing to a subsequence, $h_{i,t} / \beta_i^*$ converges to a linearized Ricci-DeTurck flow $(\td{h}_{\infty, t})_{t \in [-T,0]}$ on the background flow $(M_\infty, (g_{\infty, t})_{t \leq 0}, x_\infty)$, which is uniformly bounded due to (\ref{eq_h_bound}).
By standard local parabolic derivative estimates (see for example \cite[Lemma~A.14]{BamlerKleiner2017}) and Arzela-Ascoli, we may assume that the convergence $h_{i,t} / \beta_i^* \to \td{h}_{\infty, t}$ is locally smooth on $M_\infty \times (-T,0]$ and is locally $C^{m_{\can} + 80}$ on $M_\infty \times [-T,0]$.
Moreover, by (\ref{eq_g_prime_m_g}) we have for $m = 0, \ldots m_{\can}+80$ 
\[ \rho^{m} (\cdot, -T) \big| \nabla^m \td{h}_{\infty, -T} \big| \leq C_0.  \]

Let $\eta > 0$ be a constant whose value will be determined later and apply Proposition~\ref{Prop_LRDTF_0T}, assuming that $T$ is large enough depending on $\eta, D, m_{\can}, C_0$.
This yields the following decomposition on $B(x_\infty, 0, 2 D)$
\[ \td{h}_{\infty, 0} = h^{\rot}_0 + \nabla^2 f + h'_{\infty, 0}, \]
where $h^{\rot}_0$ is rotationally symmetric with respect to the standard $O(3)$-action on $(M, g_{\infty, 0})$, $f \in C^\infty ( B(x_\infty, 0, 2 D) )$ and $\Vert h'_{\infty, 0} \Vert_{C^{m_{\can}+100} (B(x_\infty, 0, 2 D))} \leq \eta$.

Define the maps $\chi_i : B(x_\infty, 0, 2D) \to M_\infty$ by $\chi_i (z) := \exp_z (\beta^*_i \cdot \frac12 \nabla f_z)$.
Since $\beta^*_i \to 0$, the restrictions $\chi_i |_{B(x_\infty, 0, 1.9 D)}$ are diffeomorphisms onto their images for large $i$, which smoothly converge to the identity.
Define the metrics 
\[ g^*_i := \chi_i^* ( g_{\infty,0} + \beta^*_i h^{\rot}_0 ). \]
on $B(x_\infty, 0, 1.9 D)$.
Then, as $i \to  \infty$
\[ \frac1{\beta^*_i} \big( g^*_i - ( g_{\infty,0} + \beta^*_i h^{\rot}_0 ) \big) 
= \frac1{\beta^*_i} \big( \chi^*_i g_{\infty, 0} - g_{\infty, 0} \big) + \chi^*_i h^{\rot}_0 - h^{\rot}_0 \longrightarrow \mathcal{L}_{\frac12 \nabla f} g_{\infty, 0} = \nabla^2 f.
\]
It follows that
\begin{multline*}
 \frac1{\beta^*_i} \big( \widehat{\phi}_{i,0}^* g'_{i,0} - g^*_{i} \big) = \frac1{\beta^*_i} \big( \widehat{\phi}_{i,0}^* g'_{i,0} - (g_{\infty, 0} + \beta^*_i h^{\rot}_0) \big) - \frac1{\beta^*_i} \big( g^*_{i}  - (g_{\infty, 0} + \beta^*_i h^{\rot}) \big) \\
 = \frac1{\beta^*_i} h_{i,0} - h^{\rot}_0 - \nabla^2 f \longrightarrow h'_{\infty, 0}.
\end{multline*}
So, assuming that $\eta$ is smaller than some universal constant, we obtain that $\beta (x_i,0) \leq \frac1{10} \beta^*_i$ for sufficiently large $i$, in contradiction to our choice of $x_i$.

\medskip
\textit{Case 2: After passing to a subsequence $t^*_i < 0$ for all $i$. \quad}
Recall that for each $i$ the flow $\M'_i$ satisfies the $\eps_{\can}$-canonical neighborhood assumption below some positive scale $r'_i > 0$.
So by the maximality of $t^*_i$ and an openness argument, we can find a point $y_i \in \M'_{i, t^*_i}$ of scale $\rho (y_i) < 2$ in the time-$t^*_i$-slice that violates the $\eps_{\can}/2$-canonical neighborhood assumption.
After rescaling the flow $\M'_i$ parabolically by $\rho^{-2} (y_i)$ and applying a time-shift so that the point $y_i$ is contained in the time-$0$-slice and has scale $1$, we obtain a sequence of singular flows $\M''_{i}$ on time-intervals of the form $[-T^*_i, 0]$ that satisfy the $\eps_{\can}$-canonical neighborhood assumption at scales $< 2 / \rho_i$.

\medskip
\textit{Case 2a: After passing to a subsequence, $T^*_\infty := \lim_{i \to \infty} T^*_i$ exists. \quad}
Similarly as in Case~1, we can apply the Strong Stability Theorem to compare the each flow $\M''_i$ with the correspondingly parabolically rescaled and time-shifted flow $(M, ( \rho_i^2 g_{i, t^*_i + \rho_i^{-2} t})_{t \in [-T^*_i, 0]})$.
Since (\ref{eq_g_prime_m_g}) remains preserved under rescaling and $\beta^*_i \to 0$, the Strong Stability Theorem yields that a larger and larger neighborhood of $y_i$ in $\M''_{i,0}$ becomes closer and closer to an open subset in $(M, \rho_i^2 g_{i, t^*_i})$ in the $C^{m_{\can} + 80}$-sense.
Since $(M, \rho_i^2 g_{i, t^*_i})$ is a time-slice of a $\kappa$-solution, this contradicts the choice of $y_i$ for large $i$ via Lemma~\ref{Lem_mcan_def}.

\medskip
\textit{Case 2b: $T^*_i \to \infty$. \quad}
In this case we must have $\lim_{i \to \infty} \rho_i = 0$.
Assuming that $\eps_{\can}$ is smaller than some universal constant, we can argue as in \cite[12.1]{Perelman1} to show that, after passing to a subsequence, the pointed flows $(\M''_i,y_i)$ smoothly converge to a pointed ancient non-singular flow $(M''_\infty, (g''_{\infty, t})_{t \leq 0}, y_\infty)$ with non-negative sectional curvature that satisfies the $2\eps_{\can}$-canonical neighborhood assumption at all scales.
Therefore $(M''_\infty, (g''_{\infty, t})_{t \leq 0}, y_\infty)$ is a $\kappa$-solution, in contradiction to the choice of $y_i$ for large $i$.
\end{proof}

\subsection{Proof of Theorem~\ref{Thm_O3_invariance_general}}

\begin{proof}[Proof of Theorem~\ref{Thm_O3_invariance_general}.]
It suffices to consider the case in which $(M, (g_t)_{t \leq 0})$ is not the quotient of a round sphere or a round cylinder.
Therefore $M$ is must be diffeomorphic to $\IR^3, S^3$ or $\IR P^3$.
By passing to the double cover, the case $\IR P^3$ can be reduced to the case $S^3$.
So, we only need to consider the case in which $(M, (g_t)_{t \leq 0})$ is diffeomorphic to $\IR^3$ or $S^3$, but not the shrinking round sphere.
By proving the theorem first in the $\IR^3$-case, we may moreover assume that the theorem is already true in this case when proving the case $M \approx S^3$.
Thus Lemma~\ref{Lem_sup_alpha_to_0} will be applicable in both cases.

Define $\beta : M \times (-\infty,0] \to \IR$ as in Definition~\ref{Def_pointed_roundness} and let $\ov\beta, A$ be the constants from Proposition~\ref{Prop_alpha_decay}.

Choose $\beta' \in (0, \ov\beta]$.
We will show in the following that $\beta \leq \beta'$ on $M \times (-\infty,0]$.
By letting $\beta' \to 0$, this will imply that $\beta \equiv 0$, which implies rotational symmetry by Lemma~\ref{Lem_alpha_0_O3}.

By Lemma~\ref{Lem_sup_alpha_to_0} there is a sequence $t_i \searrow -\infty$ such that
\begin{equation} \label{eq_alpha_ti_to_0}
\sup_M \beta(\cdot, t_i) \longrightarrow 0.
\end{equation}
Fix some large $i$ for which $\sup_M \beta(\cdot, t_i) \leq \beta'$ and choose $t^*_i \geq t_i$ maximal such that $\beta \leq \beta'$ on $M \times [t_i, t^*_i)$.

If $t^*_i = 0$ for infinitely many $i$, then $\beta \leq \beta'$ everywhere and we are done.
So assume that $t^*_i < 0$ for large $i$.
In the following we will only consider such indices $i$.
By maximal choice of $t^*_i$ and the upper semi-continuity of $\beta$, there is a point $y_i \in M$ such that $\beta (y_i, t^*_i) \geq \beta' / 2$.

Next, we argue that
\[ t^*_i - t_i < A R^{-1} (y_i, t_i). \]
In fact, if the opposite inequality were true, then we could apply Proposition~\ref{Prop_alpha_decay} (recall that $\beta' \leq \ov\beta$) and conclude that $\beta (y, t^*_i) \leq \beta' / 10$, in contradiction to the choice of $y_i$.

Let now $Q_i := R(y_i, t^*_i)$.
After passing to a subsequence, we may assume that $T := \lim_{i \to \infty} (t^*_i - t_i) Q_i$ exists and that the pointed and parabolically rescaled flows $(M, (Q_i g_{t^*_i + Q_i^{-1} t})_{t \leq 0}, y_i)$ converge to a pointed $\kappa$-solution $(M_\infty, (g_{\infty, t})_{t \leq 0}, y_\infty)$.
By (\ref{eq_alpha_ti_to_0}) we obtain that $g_{\infty, -T}$ is rotational symmetric.
So $g_{\infty, 0}$ must be rotational symmetric as well, in contradiction to the choice of $y_i$ for large $i$.
\end{proof}

\bibliography{library}{}
\bibliographystyle{amsalpha}

\end{document}